\title{On Guaspari's problem about partially conservative sentences}
\author{Taishi Kurahashi\thanks{Graduate School of System Informatics, Kobe University, Japan} \thanks{\texttt{kurahashi@people.kobe-u.ac.jp}}
\and
Yuya Okawa\thanks{Graduate School of Science and Engineering, Chiba University, Japan} \thanks{\texttt{ahga4770@chiba-u.jp}}
\and
V. Yu. Shavrukov\thanks{\texttt{v.yu.shavrukov@gmail.com}}
\and
Albert Visser\thanks{Department of Philosophy, Utrecht University, Netherlands} \thanks{\texttt{a.visser@uu.nl}}}
\date{}
\theoremstyle{plain}
\newtheorem{thm}{Theorem}[section]
\newtheorem{lem}[thm]{Lemma}
\newtheorem{prop}[thm]{Proposition}
\newtheorem{cor}[thm]{Corollary}
\newtheorem{fact}[thm]{Fact}
\newtheorem{cl}{Claim}
\theoremstyle{definition}
\newtheorem{defn}[thm]{Definition}
\newtheorem{rem}[thm]{Remark}
\newtheorem{prob}[thm]{Problem}
\newtheorem{conv}[thm]{Convention}
\newcommand{\PA}{\mathsf{PA}}
\newcommand{\Prv}{\mathrm{Pr}}
\let\Pr\Prv
\newcommand{\Prf}{\mathrm{Prf}}
\newcommand{\Con}{\mathrm{Con}}
\newcommand{\Cons}{\mathrm{Cons}}
\newcommand{\HCons}{\mathrm{HCons}}
\newcommand{\RFN}{\mathrm{RFN}}
\newcommand{\Th}{\mathrm{Th}}
\newcommand{\ThP}{\mathrm{Th}_{\Pi_{n}}}
\newcommand{\ThS}{\mathrm{Th}_{\Sigma_{n}}}
\newcommand{\ThG}{\mathrm{Th}_{\Gamma}}
\newcommand{\ThD}{\mathrm{Th}_{\Gamma^{d}}}
\newcommand{\gn}[1]{\ulcorner#1\urcorner}
\def\bseq#1{{\boldsymbol#1}}
\def\J{{\mathrm J}}
\def\Ik{{\mathrm I}_{k}}
\def\pn{\Pi_{n}}
\def\pnmo{\Pi_{n-1}}
\def\sn{\Sigma_{n}}
\let\dott\dot\def\dot#1{\mathbin{\dott#1}}
\begin{document}

\maketitle

\begin{abstract}
We investigate sentences which are simultaneously partially conservative over several theories. 
First, we generalize Bennet's results on this topic to the case of more than two theories. 
In particular,  for any finite family $\{T_i\}_{i \leq k}$ of consistent r.e.~extensions of Peano Arithmetic, we give a necessary and sufficient condition for the existence of a $\Pi_n$ sentence which is unprovable in $T_i$ and $\Sigma_n$-conservative over $T_i$ for all $i \leq k$. 
Secondly, we prove that for any  finite family of such theories, there exists a $\Sigma_n$ sentence which is simultaneously unprovable and $\Pi_n$-conservative over each of these theories. 
This constitutes a positive solution to a particular case of Guaspari's problem. 
Finally, we demonstrate several non-implications among related properties of families of theories. 
\end{abstract}

\section{Introduction}

Let $T$ be a recursively enumerable (r.e.)\ consistent extension of Peano Arithmetic $\PA$.
Let $\Gamma$ denote either $\Sigma_{n}$ or $\Pi_{n}$ for some $n \geq 1$. 
Also $\Th(T)$ denotes the set of all sentences provable in $T$ and $\ThG(T)$ denotes the set of all $\Gamma$ sentences provable in $T$. 
We say a sentence $\varphi$ is \textit{$\Gamma$-conservative over $T$} if for any $\Gamma$ sentence $\psi$, $T \vdash \psi$ whenever $T+\varphi \vdash \psi$. 
Define $\Cons(\Gamma, T)$ to be the set of all $\Gamma^{d}$ sentences which are $\Gamma$-conservative over $T$ where $\Sigma_{n}^{d} = \Pi_{n}$ and $\Pi_{n}^{d} = \Sigma_{n}$. 
Guaspari \cite{Gua} proved that $\Cons(\Gamma, T) \setminus \Th(T)$ is non-empty, that is, there exist $\Gamma^d$ sentences which are $\Gamma$-conservative over $T$ and unprovable in $T$. 
 Guaspari also asked the following question (\cite[Question~5(1)]{Gua}).

\begin{quotation}
{\noindent\small
\ldots~if $\langle T_{i} \mid i \in \omega \rangle$ is an r.e.~sequence of r.e.~theories is there a $\Gamma$ sentence which is independent and $\breve{\Gamma}$-conservative\footnote{Guaspari's $\breve{\Gamma}$ is our $\Gamma^d$.} over each $T_{i}$?
The question is open even for sequences of length 2.
}
\end{quotation}

Guaspari actually proved that for any theory $T$, there are $\Gamma^{d}$ sentences which are $T$-unprovable and simultaneously $\Gamma$-conservative over all subtheories of $T$. 
Thus for the family of all subtheories of $T$, Guaspari's question has an affirmative answer.  

On the other hand, Misercque~\cite{Mis,MisPhD} supplied a negative answer to the general version of Guaspari's problem. 
That is, Misercque found an infinite r.e.~family $\{T_i\}_{i \in \omega}$ of theories such that there is no $\Gamma^d$ sentence which is simultaneously unprovable and $\Gamma$-conservative over $T_i$ for all $i \in \omega$. 
The infinity of the family was essential to Misercque's argument for general~$\Gamma$.
Misercque~\cite{Mis,MisPhD} also presented an
example of consistent theories $T_0$ and~$T_1$ such that $T_{1}$ proves every $\pn$~sentence which is $\sn$-conservative over~$T_{0}$.
Thus the version of Guaspari's problem with two theories and $\Gamma = \Sigma_{n}$ is also settled negatively.

Bennet \cite{Ben86, Ben} also investigated Guaspari's problem for two theories. 
Bennet firstly proved that the statement $\bigcap_{i \leq 1}\Cons(\Gamma, T_{i}) \setminus \Th(T_{i})\neq \emptyset$ saying that ``there exists a $\Gamma^d$ sentence which is simultaneously unprovable and $\Gamma$-conservative over $T_0$ and $T_1$'' is equivalent to ``$\Cons(\Gamma, T_{0}) \setminus \Th(T_{1}) \neq \emptyset$ and $\Cons(\Gamma, T_{1}) \setminus \Th(T_{0}) \neq \emptyset$''. 
Thus the investigation of Guaspari's problem for two theories is reduced to that of simultaneous conditions of the form $\Cons(\Gamma, T) \setminus \Th(U) \neq \emptyset$. 
On~top of that, Bennet showed that the condition ``$\ThD(T) \nsubseteq \Th(U)$ or $U + \ThG(T)$ is consistent'' is sufficient for $\Cons(\Gamma, T) \setminus \Th(U) \neq \emptyset$. 
Furthermore, he proved that in the case of $\Gamma = \Sigma_{n}$, ``$\ThP(T) \nsubseteq \Th(U)$ or $U + \ThS(T)$ is consistent'' is, in~fact, equivalent to $\Cons(\Sigma_{n}, T) \setminus \Th(U) \neq \emptyset$. 

In~the case of $\Gamma = \Pi_{n}$, Bennet established that $\Cons(\Pi_{n}, T) \setminus \Th(U) \neq \emptyset$ generally 
fails to imply ``$\ThS(T) \nsubseteq \Th(U)$ or $U + \ThP(T)$ is consistent'', parting with $\Pi/\Sigma$ symmetry. 
Guaspari's problem for finitely many theories and $\Gamma = \Pi_{n}$ has up till now remained open. 

Against this background, we proceed with an investigation of Guaspari's problem in the case of three or more theories, based on Bennet's approach.
In particular, in the present paper, we completely solve Guaspari's problem for finitely many theories: for any finite family $\{T_i\}_{i \leq k}$ of consistent theories, 
\begin{enumerate}
	\item we give a necessary and sufficient condition for the non-emptiness of the set $\bigcap_{i \leq k}\Cons(\Sigma_n, T_{i}) \setminus \Th(T_{i})$; and
	\item we prove that $\bigcap_{i \leq k}\Cons(\Pi_n, T_{i}) \setminus \Th(T_{i})$ is never empty. 
\end{enumerate}
The latter contribution contrasts with the earlier negative solutions to Guaspari's problem in the $\Gamma=\Sigma_{n}$ and the infinitary cases.
We also briefly investigate Guaspari's problem for infinite r.e.~families of theories. 

In Section \ref{Back}, we survey already known results concerning Guaspari's problem. 
In Section \ref{Pre}, we introduce some notation and facts. 
In Section \ref{Gen}, we generalize Bennet's results referred~to above to the case of  more than two theories. 
Among other things, we prove that for any r.e.~family $\{T_i\}_{i \in \omega}$ of theories, if there exists an r.e.~set $X$ of natural numbers such that 
\[
\bigcap_{i \notin X} \Th_{\Gamma^d}(T_i) \nsubseteq \Th\bigg(U + \bigcup_{i \in X} \Th_{\Gamma_n}(T_i)\bigg),
\]
then $\bigcap_{i \in \omega} \Cons(\Gamma, T_{i})\setminus \Th(U)$ is non-empty. 
Moreover, we prove that the converse implication also holds for finite families of theories in the case of $\Gamma = \Sigma_n$. 
In Section \ref{Picons}, we give an affirmative answer to Guaspari's problem for finite families of theories and $\Gamma = \Pi_n$. 
In Section \ref{SecCex}, we show the failure of several implications between properties of families of theories related to Guaspari's problem.

\section{Background}\label{Back}

Throughout this paper, all theories considered are r.e.~consistent extensions of Peano Arithmetic $\PA$ in the same language, hence we call such a theory simply a \textit{theory}. 
Let $\omega$ be the set of all natural numbers. 
Throughout this paper, we assume that $n$ always denotes a non-zero natural number. 
The classes $\Sigma_{n}$ and $\Pi_{n}$ of formulas are defined as usual. 
We also assume that $\Gamma$ denotes either $\Sigma_{n}$ or $\Pi_{n}$. 
Let $\Sigma_{n}^{d} = \Pi_{n}$ and $\Pi_{n}^{d} = \Sigma_{n}$. 
We define the following sets. 

\begin{defn} Let $T$ be a theory and $M$ be a model. 
\begin{itemize}
	\item $\Th(T):=\{\,\text{sentences $\varphi$}:T\vdash\varphi\,\}$.
	\item $\ThG(T):=\{\,\text{sentences $\varphi\in\Gamma$}:T\vdash\varphi\,\}$.
	\item $\ThG(M):=\{\,\text{sentences $\varphi\in\Gamma$}:M\models\varphi\,\}$.
\end{itemize}
\end{defn}

The notion of partially conservative sentences has appeared in the context of the incompleteness theorems. 
For example, Kreisel \cite[Remark~14(ii)]{Kre} showed that the negation of the conventional consistency statement $\Con_T$ of $T$ is $\Pi_{1}$-conservative over $T$, that is, for any $\Pi_{1}$ sentence $\pi$, 
one has $T \vdash \pi$ whenever $T + \lnot \Con_{T} \vdash \pi$. 
This is an extension of G\"odel's second incompleteness theorem. 
For another example, Smory\'nski \cite[Application~5]{Smo} proved that $T$ is $\Sigma_{1}$-sound if and only if every $T$-undecidable $\Pi_{1}$ sentence is $\Sigma_{1}$-conservative over $T$. 
Also, Smory\'nski proved that $T$ is $\Sigma_{1}$-sound if and only if $\Con_{T}$ is $\Sigma_{1}$-conservative over $T$.


Guaspari investigated the general concept of $\Gamma$-conservativity in \cite{Gua} (see also H\'ajek \cite{Haj79} and Lindstr\"om \cite{Lin84}). 

\begin{defn}\leavevmode Let $T$ be any theory.
\begin{itemize}
	\item A sentence $\varphi$ is said to be \textit{$\Gamma$-conservative over $T$} if for all $\Gamma$ sentences $\psi$, if $T + \varphi \vdash \psi$, then $T \vdash \psi$.
Equivalently, $\ThG(T+\varphi)\subseteq\ThG(T)$.
	\item Let $\Cons(\Gamma, T) := \{\varphi \in \Gamma^{d} : \varphi$ is $\Gamma$-conservative over $T\}$.
\end{itemize}
\end{defn}

Deviating from the expositions in Bennet \cite{Ben86, Ben} and Lindstr\"om \cite{Lin84,Lin},
we restrict the elements of $\Cons(\Gamma, T)$ to $\Gamma^{d}$ sentences because the latter are the focus of interest for the present paper. 

Every $T$-provable $\Gamma^{d}$ sentence $\varphi$ trivially belongs to $\Cons(\Gamma, T)$. 
Guaspari proved that every theory has non-trivially $\Gamma$-conservative $\Gamma^{d}$ sentences, that is,
\begin{fact}[Guaspari {\cite[Theorem 2.4]{Gua}}]\label{exCons}
For any theory $T$, $\Cons(\Gamma, T) \setminus \Th(T) \neq \emptyset$.
\qed
\end{fact}

If $T \vdash \lnot \varphi$, then $T+ \varphi$ is inconsistent, and hence $\varphi$ is not $\Gamma$-conservative over $T$ because $T$ is consistent. 
This shows that if $\varphi \in \Cons(\Gamma, T) \setminus \Th(T)$, then $\varphi$ is undecidable in $T$. 
Therefore Fact \ref{exCons} can be thought as a strengthening of G\"odel--Rosser's first incompleteness theorem.


In this paper, many properties of uniformly r.e.~collections of theories are meaningful both for finite and for infinite collections. 
We~use the term \emph{r.e.~family of theories} $\{T_i\}_{i\in\J}$
to stand for a sequence of theories with $T_i$ being uniformly r.e.~in~$i$. 
The~index set $\J$ is a non-empty initial segment of~$\omega$, that~is, 
$\J\in\{\Ik\}_{k\in\omega}\cup\{\omega\}$, where $\Ik=\{0,\ldots,k\}$.

Mostowski proved the following generalization of G\"odel--Rosser's first incompleteness theorem.
\begin{fact}[Mostowski {\cite[Theorem 1]{Mos}}]\label{gGR}
Let $\{T_{i}\}_{i \in\J}$ be an r.e.~family of theories. 
Then there is a $\Pi_{1}$~sentence $\varphi$ such that $\varphi$, $\lnot \varphi \notin \bigcup_{i\in\J}\Th(T_{i})$.
\qed
\end{fact}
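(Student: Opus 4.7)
My plan is to adapt Rosser's argument by working with a single amalgamated proof predicate that covers all the $T_i$ at once. Because $\{T_i\}_{i\in\omega}$ is r.e., I can write down a $\Delta_1$ formula $\Prf^{*}(x,y)$ saying ``$y = \langle i, y' \rangle$ and $y'$ is a $T_i$-proof of $x$'', so that $\exists y\, \Prf^{*}(\gn{\psi},y)$ holds exactly when $\psi \in \bigcup_{i\in\omega}\Th(T_i)$. Using the diagonal lemma I would then obtain a sentence $\varphi$ satisfying
\[
\PA \vdash \varphi \leftrightarrow \forall y \, \bigl( \Prf^{*}(\gn{\varphi},y) \to \exists z \leq y \, \Prf^{*}(\gn{\lnot \varphi},z) \bigr),
\]
and since $\Prf^{*}$ is $\Delta_1$ and the inner existential is bounded, the right-hand side is $\Pi_1$, so $\varphi$ can be taken $\Pi_1$.

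The core of the argument will be a quantitative comparison of the least ``meta-proof'' numbers $P_{\varphi} := \min\{y : \Prf^{*}(\gn{\varphi},y)\}$ and $P_{\lnot\varphi}$, in the spirit of Rosser. First I would assume that some $T_i \vdash \varphi$ with proof $p$ and set $P = \langle i,p\rangle$: the fixed point then gives $T_i \vdash \exists z \leq \overline{P}\, \Prf^{*}(\gn{\lnot\varphi},z)$, which is a bounded (hence $\Delta_1$) statement, so by consistency of $T_i$ it must hold in $\mathbb{N}$; this shows $P_{\lnot\varphi}$ exists and $P_{\lnot\varphi} \leq P_{\varphi}$. Next I would assume $T_j \vdash \lnot\varphi$ with proof $q$ and set $Q = \langle j,q\rangle$: the fixed point gives $T_j \vdash \exists y\,(\Prf^{*}(\gn{\varphi},y) \land \forall z \leq y\, \lnot \Prf^{*}(\gn{\lnot\varphi},z))$, and the concrete witness $\overline{Q}$ of $\Prf^{*}(\gn{\lnot\varphi},\cdot)$ forces the existential witness $y$ to satisfy $y < Q$; the resulting bounded statement must again hold in $\mathbb{N}$ by consistency of $T_j$, giving that $P_{\varphi}$ exists and $P_{\varphi} < P_{\lnot\varphi}$.

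Combining the two halves, if either of $\varphi, \lnot\varphi$ is in $\bigcup_{i}\Th(T_i)$ then so is the other, and the inequalities collapse to $P_{\lnot\varphi} \leq P_{\varphi} < P_{\lnot\varphi}$, a contradiction. The main obstacle, relative to the classical one-theory Rosser argument, is precisely that $\Prf^{*}$ need not describe a consistent theory: different $T_i$'s may prove flatly contradictory sentences, so the usual ``$T$ proves both $\psi$ and $\lnot\psi$, hence inconsistent'' is not available. The point of using least meta-proof numbers is to extract a contradiction by an ordering argument alone, without ever needing the amalgamated system to be consistent.
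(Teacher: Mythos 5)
Your proof is correct and is essentially the standard argument behind Mostowski's theorem (the paper itself states this as Fact~3.4 without proof): a Rosser sentence for the amalgamated proof predicate, where the crucial point---which you identify explicitly---is that each bounded witness comparison is settled inside a single consistent $T_i$ via $\Sigma_1$-completeness of $\PA$, so the possible inconsistency of $\bigcup_i T_i$ never matters. The only detail worth making explicit is that $\Prf^{*}$ being $\Delta_1$ requires a uniform Craig's-trick replacement of the $T_i$ by primitive recursively axiomatized equivalents, since the axiom sets are a priori only r.e.
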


It is then natural to expect the existence of a sentence which is simultaneously $\Gamma$-conservative over several theories. 
Guaspari proposed the following problem:

\begin{prob}[Guaspari {\cite[Question~5(1)]{Gua}}]\label{Mainprob}
Given an r.e.~family $\{T_{i}\}_{i \in\J}$ of theories, must $\bigcap_{i\in\J}\Cons(\Gamma, T_{i}) \setminus \Th(T_{i}) \neq \emptyset$ hold?
\end{prob}

 Guaspari pointed out that this problem is open even for pairs of theories. 
In the remainder of this subsection, we survey known results concerning Guaspari's problem.

Guaspari actually proved a stronger result than Fact \ref{exCons}, to wit that there exists a $\Gamma^d$ sentence which is simultaneously $\Gamma$-conservative over all sufficiently strong subtheories of $T$. 
Such sentences are called hereditarily $\Gamma$-conservative. 

\begin{defn} Let $T$ be any theory.
\begin{itemize}
	\item A sentence $\varphi$ is said to be \textit{hereditarily $\Gamma$-conservative over $T$} if for all theories $S$ such that $T \vdash S \vdash \PA$, $\varphi$ is $\Gamma$-conservative over $S$.
	\item Let $\HCons(\Gamma, T) := \{\varphi \in \Gamma^{d} : \varphi$ is hereditarily $\Gamma$-conservative over $T\}$.
\end{itemize}
\end{defn}

We also restrict the elements of $\HCons(\Gamma, T)$ to $\Gamma^{d}$ sentences as in the case of $\Cons(\Gamma, T)$.

\begin{fact}[Guaspari {\cite[Theorem 2.6]{Gua}}]\label{exHCons} 
For any theory $T$, $\HCons(\Gamma, T) \setminus \Th(T) \neq \emptyset$.
\qed
\end{fact}

Fact \ref{exHCons} is strengthened by Lindstr\"om as follows. 
We say a set $X$ of sentences is \textit{pointwise consistent with a theory $T$} if $T + \varphi$ is consistent for each $\varphi \in X$. 

\begin{fact}[Lindstr\"om {\cite[Corollary 1]{Lin84}}]\label{Lin84}
Let\/ $T$ be a theory and\/ $X$ an r.e.~set of sentences which is pointwise consistent with $T$. 
Then $\HCons(\Gamma, T) \setminus X \neq \emptyset$. 
\qed
\end{fact}

Following Guaspari's study, Misercque and Bennet also investigated Guaspari's Problem \ref{Mainprob}. 
Misercque proved that Guaspari's problem does not generally admit a positive solution.

\begin{fact}[Misercque {\cite[Theorem 2.1]{Mis} or \cite[Proposition~5.1.3]{MisPhD}}]\label{MsCex}
There is an infinite r.e.~family $\{T_{i}\}_{i \in \omega}$ of theories such that 
for all $\Gamma$, $\bigcap_{i\in\omega}\Cons(\Gamma, T_{i}) \setminus \Th(T_{i}) = \emptyset$.
\qed
\end{fact}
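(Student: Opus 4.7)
The plan is to construct a single r.e.\ sequence $\{T_n\}_{n\in\omega}$ of consistent theories such that every arithmetic sentence is decided by at least one of them. Fix an effective enumeration $\varphi_0,\varphi_1,\ldots$ of all sentences, and arrange the sequence so that $T_n$ decides $\varphi_n$. Given such a sequence, fix any $\Gamma$ and suppose some $\varphi$ lies in $\bigcap_{i\in\omega}(\Cons(\Gamma,T_i)\setminus\Th(T_i))$. Writing $\varphi=\varphi_n$, the case $T_n\vdash\varphi_n$ contradicts $\varphi\notin\Th(T_n)$, while the case $T_n\vdash\lnot\varphi_n$ makes $T_n+\varphi_n$ inconsistent; since the sentence $\overline{0}=\overline{1}$ then belongs to the $\Gamma$ theory of $T_n+\varphi_n$ but not of the consistent theory $T_n$, we obtain $\varphi\notin\Cons(\Gamma,T_n)$, a contradiction. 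The same sequence therefore witnesses the conclusion for every $\Gamma$ at once.

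The construction of the $T_n$ is carried out by a Rosser-style diagonal argument. The naive recipe ``take $T_n=\PA+\varphi_n$ when consistent, and $T_n=\PA+\lnot\varphi_n$ otherwise'' is not effective because consistency is $\Pi_1$-complete, so a Rosser comparison of proofs is needed to repair it. Concretely, for each $n$ I would apply Fact \ref{dual} with $\alpha(x)\equiv\Prf_{\PA}(\gn{\varphi_n},x)$ and $\beta(y)\equiv\Prf_{\PA}(\gn{\lnot\varphi_n},y)$ to obtain $\Sigma_1$ sentences $\sigma_n$ and $\sigma_n^*$ comparing the shortest $\PA$-proofs of $\varphi_n$ and of $\lnot\varphi_n$. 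A Rosser fixed point $\rho_n$ combining $\sigma_n$, $\sigma_n^*$ and $\varphi_n$ then yields a single r.e.\ axiom so that $T_n:=\PA+\rho_n$ is forced to decide $\varphi_n$; the resulting sequence is visibly r.e.\ in $n$.

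The main technical obstacle is verifying that $T_n$ is consistent for every $n$. Here Fact \ref{dual}(1)---the $\PA$-provable disjunction $\lnot\sigma_n\lor\lnot\sigma_n^*$---is crucial, since it rules out the pathological scenario where both Rosser sides become provable simultaneously. The verification then splits into three cases: if $\PA\vdash\varphi_n$, then $\sigma_n^*$ holds and $T_n$ collapses onto $\PA+\varphi_n$; symmetrically if $\PA\vdash\lnot\varphi_n$; and if $\PA$ is undecided on $\varphi_n$, then neither $\sigma_n$ nor $\sigma_n^*$ is $\PA$-provable and the Rosser axiom commits $T_n$ to one side of $\varphi_n$ as a consistent proper extension of $\PA$. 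Threading these cases into a uniform r.e.\ axiomatization is the essential bookkeeping; no conceptual ingredient beyond Fact \ref{dual} and standard self-reference is required.
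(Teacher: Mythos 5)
Your reduction in the first paragraph is fine, but the construction it reduces to cannot exist, so the proof has a fatal gap rather than a repairable one. You are asking for an r.e.\ sequence $\{T_n\}_{n\in\omega}$ of \emph{consistent} extensions of $\PA$ such that $T_n$ decides $\varphi_n$ for every $n$. Suppose such a sequence existed. Since $\{\langle n,\theta\rangle : T_n\vdash\theta\}$ is r.e.\ and each $T_n$ decides $\varphi_n$, the search for a $T_n$-proof of $\varphi_n$ or of $\lnot\varphi_n$ always terminates, and by consistency exactly one of the two is provable; hence $C:=\{n : T_n\vdash\varphi_n\}$ is a \emph{recursive} set. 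If $\PA\vdash\varphi_n$ then $T_n\vdash\varphi_n$ and $T_n\nvdash\lnot\varphi_n$, so $n\in C$; if $\PA\vdash\lnot\varphi_n$ then $n\notin C$. Thus $C$ recursively separates $\{n:\PA\vdash\varphi_n\}$ from $\{n:\PA\vdash\lnot\varphi_n\}$, contradicting the classical fact (Rosser--Smullyan) that these two sets are recursively (indeed effectively) inseparable for any consistent r.e.\ theory containing a modicum of arithmetic: a separating recursive $C$ would be strongly representable by some $\gamma(x)$, and the fixed point $\rho$ with $\PA\vdash\rho\leftrightarrow\lnot\gamma(\gn{\rho})$ lands in neither side correctly. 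This is exactly why your third paragraph cannot be completed: in the case where $\PA$ does not decide $\varphi_n$, no uniformly computed Rosser axiom $\rho_n$ can make $\PA+\rho_n$ both consistent and committed to one side of $\varphi_n$; the informal step ``the Rosser axiom commits $T_n$ to one side of $\varphi_n$ as a consistent proper extension of $\PA$'' is precisely the impossible part, and the non-effectiveness of your three-case split is not mere bookkeeping but the heart of the obstruction.

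The way around this (and, in essence, Misercque's route) is to weaken what each $T_n$ must do to $\varphi_n$: you do not need $T_n\vdash\lnot\varphi_n$ to conclude $\varphi_n\notin\Cons(\Gamma,T_n)$; it suffices that $T_n+\varphi_n$ proves some $\Gamma$ sentence that $T_n$ does not prove. Taking $T_n:=T+(\lnot\varphi_n\lor\psi)$ for a suitably chosen $\psi$ (and adding $T+\lnot\psi$ to the list) achieves this effectively: $T_n+\varphi_n\vdash\psi$, so either $T_n\nvdash\psi$ and $\varphi_n$ is not $\Gamma$-conservative over $T_n$, or $T_n\vdash\psi$, in which case $T+\lnot\psi\vdash\varphi_n$ and $\varphi_n$ is provable in a member of the sequence. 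This is the mechanism used in the proof of Theorem \ref{CCC} in the present paper, and it sidesteps the inseparability barrier because no theory in the sequence is ever required to refute $\varphi_n$.
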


Since Misercque's family of theories is not finite, it is natural to see Guaspari's problem restricted to finite families of theories as a separate challenge. 
Misercque and Bennet analyzed the existence of $\Gamma^d$ sentences which are simultaneously $\Gamma$-conservative over two theories. 
Bennet showed that Guaspari's problem for two theories can be reduced to a more easily studied problem. 

\begin{fact}[Bennet {\cite[Corollary~8]{Ben86}} or {\cite[Corollary 3.1.9]{Ben}}]\label{B1}
For any theories $T_{0}$ and $T_{1}$, the following are equivalent:
\begin{enumerate}
	\item $\bigcap_{i \leq 1}\Cons(\Gamma, T_{i}) \setminus \Th(T_{i})\neq \emptyset$;
	\item $\Cons(\Gamma, T_{0}) \setminus \Th(T_{1}) \neq \emptyset$ and $\Cons(\Gamma, T_{1}) \setminus \Th(T_{0}) \neq \emptyset$.\qed
\end{enumerate}
\end{fact}

Therefore, the investigation of Guaspari's problem for two theories is equivalent to that of conditions of the form $\Cons(\Gamma, T) \setminus \Th(U) \neq \emptyset$.
Bennet found a sufficient condition for $\Cons(\Gamma, T) \setminus \Th(U) \neq \emptyset$:

\begin{fact}[Bennet {\cite[p.\,67]{Ben86}} or {\cite[p.\,38]{Ben}}; see also
   Misercque~{\cite[Proposition~5.2.3]{MisPhD}}]\label{B2'}
Let\/ $T$ and\/ $U$ be theories. 
Suppose $\ThD(T) \not \subseteq \Th(U)$ or $U + \ThG(T)$ is consistent.
Then $\Cons(\Gamma, T) \setminus \Th(U) \neq \emptyset$.
\qed
\end{fact}

In the case of $\Gamma = \Sigma_{n}$, this sufficient condition is also necessary.

\begin{fact}[Bennet {\cite[Theorem 6]{Ben86}} or {\cite[Theorem 3.1.7]{Ben}}]\label{B3'}
For any theories $T$ and $U$, the following are equivalent:
\begin{enumerate}
	\item $\Cons(\Sigma_{n}, T) \setminus \Th(U) \neq \emptyset$;
	\item $\ThP(T) \not \subseteq \Th(U)$ or $U + \ThS(T)$ is consistent.\qed
\end{enumerate}
\end{fact}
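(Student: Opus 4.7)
The easier direction $(2) \Rightarrow (1)$ is an immediate instance of Fact \ref{B2'} specialized to $\Gamma = \Sigma_n$: hypothesis (2) of the present statement is exactly the hypothesis of Fact \ref{B2'} (with $\Gamma^{d} = \Pi_n$), and the conclusion is (1).

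For $(1) \Rightarrow (2)$, I plan to argue contrapositively. Assume $\ThP(T) \subseteq \Th(U)$ and $\ThS(T) + U$ is inconsistent; the goal is $\Cons(\Sigma_{n}, T) \subseteq \Th(U)$. By closure of $\Sigma_{n}$ under conjunction, the inconsistency of $\ThS(T) + U$ yields a single $\Sigma_{n}$ sentence $\sigma$ with $T \vdash \sigma$ and $U \vdash \lnot \sigma$. Now fix any $\varphi \in \Cons(\Sigma_{n}, T)$, so $\varphi$ is a $\Pi_{n}$ sentence and $T + \varphi$ is consistent. Since $T + \varphi \vdash \sigma$, Fact \ref{Gam}(3) applied with $\psi := \sigma$ furnishes a $q \in \omega$ with $\PA \vdash \lnot \sigma \to \lnot [\Sigma_{n}]_{T}(\gn{\varphi}, \overline{q})$, whence $U \vdash \lnot [\Sigma_{n}]_{T}(\gn{\varphi}, \overline{q})$. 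On the other hand, by Fact \ref{Gam}(2) the $\Sigma_{n}$ sentence $[\Sigma_{n}]_{T}(\gn{\varphi}, \overline{q})$ belongs to $\ThS(T + \varphi)$, and by $\Sigma_{n}$-conservativity of $\varphi$ it actually lies in $\ThS(T)$.

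To finish, I would locate a $\Pi_{n}$ sentence $\eta \in \ThP(T)$ with $\PA \vdash \eta \land \lnot [\Sigma_{n}]_{T}(\gn{\varphi}, \overline{q}) \to \varphi$; then $\eta \in \Th(U)$ (by the first hypothesis) together with $U \vdash \lnot [\Sigma_{n}]_{T}(\gn{\varphi}, \overline{q})$ delivers $U \vdash \varphi$. Producing such an $\eta$ is the main obstacle, and it is precisely the step where $\Sigma_{n}$-specific features enter, explaining why the corresponding equivalence fails in the $\Pi_{n}$-conservative setting. I expect the construction of $\eta$ to proceed via a Rosser-type witness-comparison in the spirit of Fact \ref{dual}, or by exploiting a fixed-point characterization of $\Sigma_{n}$-conservative $\Pi_{n}$ sentences; the asymmetry that $[\Sigma_{n}]_{T}$ is itself $\Sigma_{n}$ while its negation is $\Pi_{n}$ is what permits the necessary information to be carried by $\ThP(T)$ and hence transferred to $\Th(U)$.
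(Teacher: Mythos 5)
Your direction $(2)\Rightarrow(1)$ is fine. But in $(1)\Rightarrow(2)$ there is a genuine gap, and it is exactly the step you flag yourself: you never produce the $\Pi_{n}$ sentence $\eta$, and without it the argument does not close. Moreover, the material you do develop---passing to $[\Sigma_{n}]_{T}(\gn{\varphi},\overline{q})$ via Fact \ref{Gam}---is a detour that buys nothing: all it yields is a $\Sigma_{n}$ sentence provable in $T$ and refutable in $U$, which you already had in the form of $\sigma$. The formalized-conservativity predicate $[\Gamma]_{T}$ is the tool for \emph{constructing} partially conservative sentences by self-reference (as in Theorem \ref{GHC1}); it is not needed here, where $\varphi$ is given and the task is to show $U \vdash \varphi$.

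The missing step is a direct witness comparison between $\sigma$ and $\lnot\varphi$, which is precisely how the paper handles this (Fact \ref{B3'} is the case $k=0$ of Theorem \ref{GC2}, whose proof runs on Fact \ref{dual}). Concretely: write $\sigma \equiv \exists x\,\alpha(x)$ and $\lnot\varphi \equiv \exists y\,\beta(y)$ with $\alpha,\beta \in \Pi_{n-1}$, and set $\theta :\equiv \exists x(\alpha(x) \land \forall y \leq x\,\lnot\beta(y))$ and $\theta^{*} :\equiv \exists y(\beta(y) \land \forall x < y\,\lnot\alpha(x))$. Since $T \vdash \sigma$ and $\varphi$ implies $\forall y\,\lnot\beta(y)$, we get $T + \varphi \vdash \theta$; as $\theta$ is $\Sigma_{n}$ and $\varphi \in \Cons(\Sigma_{n},T)$, this gives $T \vdash \theta$, hence $T \vdash \lnot\theta^{*}$ by Fact \ref{dual}.(1). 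Now $\lnot\theta^{*}$ is $\Pi_{n}$, so the hypothesis $\ThP(T) \subseteq \Th(U)$ yields $U \vdash \lnot\theta^{*}$ --- this $\lnot\theta^{*}$ is exactly your sought-for $\eta$. Indeed, by Fact \ref{dual}.(2), $\PA + \lnot\varphi \vdash \theta \lor \theta^{*}$, so $U + \lnot\varphi \vdash \theta$, hence $U + \lnot\varphi \vdash \sigma$; combined with $U \vdash \lnot\sigma$ this forces $U \vdash \varphi$. Note that the second conjunct accompanying $\eta$ should be $\lnot\sigma$ itself, not $\lnot[\Sigma_{n}]_{T}(\gn{\varphi},\overline{q})$. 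Your closing intuition about the $\Sigma_{n}/\Pi_{n}$ asymmetry is correct in spirit --- the point is that $\lnot\theta^{*}$ is $\Pi_{n}$ and so transfers from $T$ to $U$ --- but the proposal as written stops short of the one construction that makes the proof work.
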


Let $\varphi$ be a sentence such that $\varphi \in \Cons(\Pi_{n}, \PA) \setminus \Th(\PA)$ (see Fact \ref{exCons}). Let $T_0 := \PA + \varphi$ and $T_1 := \PA + \lnot \varphi$.
Then, it is easy to see $\ThP(T_0) \subseteq \Th(T_1)$ and $T_1 + \ThS(T_0)$ is inconsistent. 
Hence, $\Cons(\Sigma_{n}, T_0) \setminus \Th(T_1) = \emptyset$ by Fact \ref{B3'}.
Therefore, by Fact \ref{B1}, $\bigcap_{i\leq1}\Cons(\Sigma_{n},T_{i}) \setminus \Th(T_{i})= \emptyset$. 
Thus Guaspari's problem is answered negatively for the pair $T_{0}$, $T_{1}$ and $\Gamma = \Sigma_{n}$ (see Misercque \cite[Theorem~2.2]{Mis} or~\cite[Proposition~5.1.2]{MisPhD}
or Lindstr\"om \cite[Exercise 5.9(a)]{Lin}). 

Bennet proved that the condition $\Cons(\Pi_{n}, T) \setminus \Th(U) \neq \emptyset$ cannot be characterized as in Fact \ref{B3'}. 

\begin{fact}[Bennet {\cite[pp.\,67--68]{Ben86}} or {\cite[Corollary 3.2.6]{Ben}}]\label{CexPi}
There are $T$ and $U$ satisfying the following conditions:
\begin{enumerate}
	\item $\Cons(\Pi_{n}, T) \setminus \Th(U) \neq \emptyset$;
	\item $\ThS(T) \subseteq \Th(U)$;
	\item $U + \ThP(T)$ is inconsistent.\qed
\end{enumerate}
\end{fact}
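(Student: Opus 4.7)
The plan is to let $T$ and $U$ be the two completions of a Solovay-style $\Pi_{n}$ sentence, so that conditions 2 and 3 hold immediately, and then to manufacture a $\Sigma_{n}$ witness for condition 1 by a Rosser-style self-reference. By Fact \ref{dualCons} applied to $\PA$ with $\Gamma = \Sigma_{n}$, fix a $\Pi_{n}$ sentence $\pi$ with $\pi \in \Cons(\Sigma_{n},\PA) \setminus \Th(\PA)$ and $\lnot\pi \in \Cons(\Pi_{n},\PA) \setminus \Th(\PA)$; arrange the construction so that $\pi$ is also externally true, guaranteeing $\Pi_{n}$-soundness of $T := \PA + \pi$. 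Set $U := \PA + \lnot\pi$. For condition 2, any $\Sigma_{n}$ theorem of $T$ is provable in $\PA$ by the $\Sigma_{n}$-conservativity of $\pi$, hence in $U$. For condition 3, $\pi \in \ThP(T)$ while $U \vdash \lnot\pi$, so $\ThP(T) + U$ is inconsistent.

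For condition 1, I would use the diagonal lemma to obtain a $\Sigma_{n}$ sentence $\tau$ satisfying
\[
\tau \leftrightarrow \exists x\bigl(\lnot [\Pi_{n}]_{T}(\gn{\tau}, x) \land \forall y \leq x\,\lnot \Prf_{U}(\gn{\tau}, y)\bigr),
\]
and set $\alpha(x) \equiv \lnot[\Pi_{n}]_{T}(\gn{\tau}, x)$, $\beta(y) \equiv \Prf_{U}(\gn{\tau}, y)$, both $\Sigma_{n}$, so that $\tau$ is the $\sigma$ of Fact \ref{dual} and its dual $\sigma^{*}$ is likewise $\Sigma_{n}$. For $U \nvdash \tau$: if $U$ proved $\tau$ with least proof code $p$, then $\PA$ would prove $\beta(\bar p)$ together with $\forall y < \bar p\,\lnot\beta(y)$; using the $\Pi_{n}$-soundness of $T$, Fact \ref{Gam}(1), and a careful inspection of the bounded check $[\Pi_{n}]_{T}(\gn{\tau}, \bar x)$ for each standard $x < p$, one would deliver $\forall x < \bar p\,[\Pi_{n}]_{T}(\gn{\tau}, x)$, hence $\sigma^{*}$ witnessed by $y = \bar p$, hence $\PA \vdash \lnot\tau$ by Fact \ref{dual}(1) -- contradicting $U \vdash \tau$. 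For $\tau \in \Cons(\Pi_{n}, T)$: assuming $T + \tau \vdash \pi'$ for some $\pi' \in \Pi_{n}$, Fact \ref{Gam}(3) supplies $q$ with $\PA + \lnot\pi' \vdash \alpha(\bar q)$; since $U \nvdash \tau$, $\PA$ proves $\forall y \leq \bar q\,\lnot\beta(y)$ by $\Sigma_{1}$-completeness applied to each negation, so $\PA + \lnot\pi' \vdash \tau$ by the defining equivalence; combined with $T + \tau \vdash \pi'$ this yields $T + \lnot\pi' \vdash \pi'$, and hence $T \vdash \pi'$.

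The main obstacle is verifying the clause $\forall x < \bar p\,[\Pi_{n}]_{T}(\gn{\tau}, x)$ in the $U \nvdash \tau$ half: although each standard instance is externally true under the $\Pi_{n}$-soundness of $T$, lifting this to full $\PA$-provability when $n > 1$ requires exploiting Fact \ref{Gam}(1) (monotonicity in the second argument) to convert the bounded external verification into a uniform syntactic derivation. The Rosser-style dovetail between $\Prf_{U}$ and $[\Pi_{n}]_{T}$, mediated by the $\sigma/\sigma^{*}$ duality of Fact \ref{dual}, is the technical heart of Bennet's construction and is where the details must be handled most delicately.
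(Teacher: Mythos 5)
Your choice of $T$ and $U$ and the verification of conditions 2 and 3 are fine, and in fact coincide with the base theories used in the paper's Theorem \ref{newPc2} (there $T^{-} = \PA + \lnot\varphi$ and $U = \PA + \varphi$). The gap is in condition 1, and it is exactly the step you flag as the ``main obstacle'': to refute $U \vdash \tau$ you need $\PA \vdash \forall x < \overline{p}\, [\Pi_{n}]_{T}(\gn{\tau}, x)$, and this is not available. Unfolding the definition, $[\Pi_{n}]_{T}(\gn{\tau},\overline{m})$ asserts ${\rm True}_{\Pi_{n}}(\gn{u})$ for every $\Pi_{n}$ sentence $u$ with code $\leq m$ admitting a $T$-proof of $\tau \to u$ with code $\leq m$; so $\PA$-provability of this instance requires $\PA \vdash u$ for each such $u$. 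All you know about such $u$ is $T + \tau \vdash u$. Even granting that $u$ is externally true (which would need soundness of $T + \tau$, not of $T$ --- and the truth of $\tau$ is unknown), a true $\Pi_{n}$ sentence need not be $\PA$-provable, and Fact \ref{Gam}.(1) is mere monotonicity in the second argument; it cannot convert truth into provability. The tool that is actually available, Fact \ref{Gam}.(2), yields $[\Pi_{n}]_{T}(\gn{\tau},\overline{m})$ only inside $T + \tau$; running your argument there gives $T + \tau \vdash \sigma^{*}$ and hence $T \vdash \lnot\tau$, which combined with $U \vdash \tau$ yields only $\PA \vdash \tau \leftrightarrow \lnot\pi$ --- no contradiction. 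This is precisely why the same-theory Rosser comparison in Theorem \ref{GHC1} closes up while your cross-theory comparison of $[\Pi_{n}]_{T}$ against $\Prf_{U}$ does not.

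The paper circumvents this with two ingredients your proposal lacks. First (Lemma \ref{Prp} and Theorem \ref{newPc2}), $T$ is not $\PA + \pi$ itself but its extension by $\Rfn_{P}(\Pi_{n})$ for a self-referentially constructed $\Sigma_{n}$ semi-provability predicate $P$ of $U$ in $T^{-}$. Second (Theorem \ref{newPc1}), the unprovability in $U$ of the Rosser witness is obtained model-theoretically: one takes a model $M$ of $T$ omitting a $\Pi_{n}$-conservative $\sigma$, shows $U + \ThS(M)$ is consistent using $T \vdash \Rfn_{P}(\Pi_{n})$, and observes that the dual sentence $\varphi^{*}$ lies in $\ThS(M)$, whence $U \nvdash \varphi$. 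Without something playing the role of this reflection principle it is not even clear that your bare pair $T = \PA + \pi$, $U = \PA + \lnot\pi$ satisfies condition 1 at all --- that is essentially the still-open Problem \ref{BenProb}. Two smaller points: Fact \ref{dualCons} does not by itself let you ``arrange'' $\pi$ to be true when $n > 1$; and your $\sigma^{*}$ is $\Sigma_{n+1}$ rather than $\Sigma_{n}$, though nothing in your argument actually uses that claim.
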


For two theories, the major remaining case of Guaspari's problem is the case of $\Gamma = \Pi_{n}$:

\begin{prob}[Misercque {\cite[Probl\`eme~7]{MisPhD}}]\label{MisProb}
Are there $n\geq1$ and theories $T_{0}$ and~$T_{1}$ such that 
\[
\bigcap_{i \leq 1}\Cons(\pn, T_{i}) \setminus \Th(T_{i})=\emptyset\;?
\]
\end{prob}

Bennet's analysis relates this problem to

\begin{prob}[Bennet {\cite[Q3]{Ben86}} or {\cite[p.\,38]{Ben}}]\label{BenProb}
Are there theories $T$ and $U$ such that 
\[
\Cons(\Pi_{n}, T) \setminus \Th(U) = \emptyset\;?
\]
\end{prob}

In Section~\ref{Picons}, we
shall obtain a negative answer to Bennet's Problem~{\ref{BenProb}} which will
enable us to settle Problem~{\ref{MisProb}} for all finite families of theories.

Bennet also investigated a variant of Guaspari's problem for hereditarily $\Gamma$-conservative sentences. 
He proved the following equivalence concerning the condition $\bigcap_{i \leq 1}\HCons(\Gamma, T_{i}) \setminus \Th(T_{i})\neq \emptyset$, which corresponds to Fact~\ref{B1}. 


\begin{fact}[Bennet {\cite[Theorem~4 and Corollary~5]{Ben86}} or {\cite[Theorem~3.1.5 and Corollary~3.1.6]{Ben}}]\label{HCfB1}
For any theories $T_{0}$ and $T_{1}$, the following are equivalent:
\begin{enumerate}
	\item $\bigcap_{i \leq 1}\HCons(\Gamma, T_{i}) \setminus \Th(T_{i}) \neq \emptyset$;
	\item $\HCons(\Gamma, T_{0}) \setminus \Th(T_{1}) \neq \emptyset$ and $\HCons(\Gamma, T_{1}) \setminus \Th(T_{0}) \neq \emptyset$.\qed
\end{enumerate}
\end{fact}

Bennet characterized the condition $\HCons(\Gamma, T) \setminus \Th(U) \neq \emptyset$ by 
employing the method Misercque used in his proof of Fact \ref{MsCex} and the following lemma by Guaspari. 
Let $\overline{m}$ denote the numeral for a natural number~$m$. 

\begin{fact}[Guaspari {\cite[Lemma 2.10]{Gua}}]\label{F1}
For any r.e.~set $X \subseteq \omega$, there exists a $\Gamma$ formula $\delta(x)$ satisfying the following conditions for any $m \in \omega$: 
\begin{enumerate}
	\item If\/ $m \in X$, then $T \vdash \delta(\overline{m})$; 
	\item If\/ $m \notin X$, then $\neg \delta(\overline{m}) \in \HCons(\Gamma, T)$.\qed
\end{enumerate}
\end{fact}

In this paper, we generalize Bennet's results without formally relying on said results except for the following fact. Fact \ref{HCfB2} will be used to establish a generalization of itself. We therefore include Bennet's proof.
Fix a natural g\"odelnumbering, and for any 
formula $\varphi$, let $\gn{\varphi}$ denote the numeral for the g\"odelnumber of $\varphi$. 

\begin{fact}[Bennet {\cite[Theorem 4]{Ben86}} or {\cite[Theorem 3.1.5]{Ben}}]\label{HCfB2}
For any theories $T$ and $U$, the following are equivalent:
\begin{enumerate}
	\item $\HCons(\Gamma, T) \setminus \Th(U) \neq \emptyset$;
	\item $U + \ThG(T)$ is consistent.
\end{enumerate}
\end{fact}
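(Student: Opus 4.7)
For the implication (1) $\Rightarrow$ (2), I argue by contraposition: assume $\ThG(T) + U$ is inconsistent and deduce $\HCons(\Gamma, T) \subseteq \Th(U)$. The inconsistency yields a $\Gamma$ sentence $\psi$ with $T \vdash \psi$ and $U \vdash \lnot\psi$ (conjoin finitely many $\Gamma$ sentences provable in $T$ whose union is $U$-inconsistent, using that $\Gamma$ is closed under conjunction). Fix $\varphi \in \HCons(\Gamma, T)$; since $\varphi \in \Gamma^{d}$, the sentence $\lnot\varphi$ is (equivalent to one in) $\Gamma$, so $\rho := \lnot\varphi \lor \psi$ is a $\Gamma$ sentence with $T \vdash \rho$. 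Set $S := \PA + \rho$; then $T \vdash S \vdash \PA$ and $S$ is a consistent r.e.~extension of $\PA$, so hereditary conservativity gives that $\varphi$ is $\Gamma$-conservative over $S$. Since $\rho \wedge \varphi \vdash \psi$ in pure logic, $S + \varphi \vdash \psi$, and conservativity yields $S \vdash \psi$, i.e., $\PA \vdash \rho \to \psi$, equivalently $\PA \vdash \varphi \lor \psi$. Combined with $U \vdash \lnot\psi$, this gives $U \vdash \varphi$, as required.

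For the converse (2) $\Rightarrow$ (1), suppose $\ThG(T) + U$ is consistent; the plan is a Misercque-style fixed-point construction of a $\Gamma^{d}$ sentence $\varphi$ that is $U$-unprovable and $\Gamma$-conservative over every r.e.~subtheory $S$ of $T$ extending $\PA$. Enumerate all r.e.~theories by indices $\{S_{e}\}_{e \in \omega}$, and via the diagonal lemma define $\varphi$ as a fixed point whose informal reading at bound $y$ is: no $U$-proof of $\varphi$ of code $\leq y$ exists, and for each $e, \chi \leq y$ with $\chi \in \Gamma$, any apparent witness (of code $\leq y$) to a failure of $[\Gamma]_{S_{e}}(\gn{\varphi}, y)$ is dominated by an earlier $S_{e}$-proof of $\chi$, provided the axioms of $S_{e}$ used are certified $T$-provable by stage $y$. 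The formulas $[\Gamma]_{S_{e}}$ of Fact \ref{Gam} are themselves in $\Gamma$ and, via the monotonicity in clause (1), are exactly what keeps $\varphi$ in $\Gamma^{d}$. Hereditary $\Gamma$-conservativity over $T$ is then read off Fact \ref{Gam}(3), which converts any external proof $S + \varphi \vdash \chi$ into a $\PA + \lnot\chi$-refutation of $[\Gamma]_{S}(\gn{\varphi}, \overline{q})$ at some $q$, which the fixed-point design forces to be overridden by a genuine $S$-proof of $\chi$; unprovability of $\varphi$ in $U$ uses the consistency of $\ThG(T) + U$ to show that a hypothetical $U$-proof of $\varphi$ would have to be absorbed at a finite stage, which the consistency assumption rules out.

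The principal obstacle will be the quantifier bookkeeping in the direction (2) $\Rightarrow$ (1): the fixed point must quantify over r.e.~indices of subtheories, over $\Gamma$-sentences, and over proofs, yet remain a $\Gamma^{d}$ sentence. The $[\Gamma]_{S_{e}}$-machinery, and especially the monotonicity clause Fact \ref{Gam}(1), is the device that makes this feasible by internalizing conservativity at the appropriate complexity. A secondary subtlety is threading the consistency of $\ThG(T) + U$ through the construction so that $U$-unprovability emerges cleanly without destroying hereditary conservativity.
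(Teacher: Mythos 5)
Your argument for $1 \Rightarrow 2$ is correct and complete. Note that the paper does not actually prove this Fact (it is quoted from Bennet \cite{Ben}), but your contrapositive argument closely parallels the one the authors give for the implication $2 \Rightarrow 3$ of Theorem \ref{GHC2}, and is in fact slightly cleaner: by passing to the single subtheory $S := \PA + (\lnot\varphi \lor \psi)$ you avoid their case distinction, and the chain $S + \varphi \vdash \psi$, hence $S \vdash \psi$, hence $\PA \vdash \varphi \lor \psi$, hence $U \vdash \varphi$, is watertight.

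The direction $2 \Rightarrow 1$, however, is a plan rather than a proof, and the plan has a genuine gap. First, no fixed point is actually exhibited: the ``informal reading'' cannot be checked for correctness, and the claim that the resulting sentence is $\Gamma^{d}$ --- which you yourself flag as ``the principal obstacle'' --- is precisely the content of the construction, not a detail that can be deferred. Second, the decomposition is off: quantifying over indices $e$ of arbitrary r.e.\ theories $S_{e}$ and trying to certify inside the fixed point that $S_{e}$ is a subtheory of $T$ is both unnecessary and the source of the complexity trouble (the predicate ``every theorem of $S_{e}$ is $T$-provable'' is $\Pi_{2}$, and your stage-$y$ certification device silently changes which theories you end up conservative over). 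The standard reduction, which the paper itself uses without comment in the closing paragraphs of the proofs of Theorems \ref{GHC2} and \ref{OSC}, is: $\varphi \in \HCons(\Gamma, T)$ if and only if $\varphi$ is $\Gamma$-conservative over $\PA + \gamma$ for every $\gamma \in \ThG(T)$. (If $T \vdash S \vdash \PA$, $\chi \in \Gamma$ and $S + \varphi \vdash \chi$, then $\gamma :\equiv \varphi \to \chi$ is a $\Gamma$ sentence in $\ThG(S) \subseteq \ThG(T)$ with $\PA + \gamma + \varphi \vdash \chi$; conversely each $\PA + \gamma$ is itself such an $S$.) This replaces your quantifier over r.e.\ indices by a quantifier over $T$-proofs of $\Gamma$ sentences, which is r.e.\ and sits at the right complexity for a Rosser-style witness comparison against $\Prf_{U}(\gn{\varphi}, y)$. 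Finally, the step where the consistency of $\ThG(T) + U$ forces $U \nvdash \varphi$ is only asserted; since by your own direction $1 \Rightarrow 2$ the conclusion fails whenever that consistency fails, this hypothesis must enter the verification in an essential, quantitative way, and showing exactly how is the remaining mathematical work.
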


\begin{proof}
$1 \Rightarrow 2$: Suppose $U + \ThG(T)$ is inconsistent. 
Then, there exists a $\Gamma$~sentence $\psi$ such that $T \vdash \psi$ and $U \vdash \lnot \psi$. Let $\varphi \in \HCons(\Gamma, T)$. 
Since $\PA + (\psi \lor \lnot \varphi)$ is a subtheory of $T$ 
and $\PA + (\psi \lor \lnot \varphi) + \varphi \vdash \psi$, we obtain $\PA + (\psi \lor \lnot \varphi) \vdash \psi$. 
Then, $\PA + \lnot \psi \vdash \varphi$ and hence, $U \vdash \varphi$. 

$2 \Rightarrow 1$: Suppose $U + \ThG(T)$ is consistent. 
Fact \ref{F1}, when applied to $X = \{\gn{\varphi} : \varphi \in \Th(U)\}$, yields a $\Gamma$ formula $\delta(x)$ such that for any sentence $\varphi$, 
\begin{itemize}
	\item[(a)] If $U \vdash \varphi$, then $T \vdash \delta(\gn{\varphi})$; 
	\item[(b)] If $U \nvdash \varphi$, then $\lnot \delta(\gn{\varphi}) \in \HCons(\Gamma, T)$. 
\end{itemize}
Let $\psi$ be a $\Gamma^{d}$ sentence such that $\PA \vdash \psi \leftrightarrow \lnot \delta(\gn{\psi})$. 
We show $U \nvdash \psi$ and $\psi \in \HCons(\Gamma, T)$. 
Towards contradiction, assume $U \vdash \psi$. 
By (a), we have $T \vdash \delta(\gn{\psi})$, whereas $U \vdash \lnot \delta(\gn{\psi})$ by the definition of $\psi$. 
Since $\delta(\gn{\psi})$ is a~$\Gamma$ sentence, $U + \ThG(T)$ is inconsistent.  
This contradicts our supposition. 
Therefore $U \nvdash \psi$ and hence, $\psi \in \HCons(\Gamma, T)$ by (b). 
\end{proof}

Let $\varphi$ be a $\Gamma$ sentence such that $\PA \nvdash \varphi$ and $\PA \nvdash \lnot \varphi$. 
Let $T_0 := \PA + \varphi$ and $T_1 := \PA + \lnot \varphi$.
Then, $T_1 + \Th_{\Gamma}(T_0)$ is inconsistent. 
Hence, $\HCons(\Gamma, T_0) \setminus \Th(T_1) = \emptyset$ by Fact \ref{HCfB2}.
Therefore, $\bigcap_{i \leq 1}\HCons(\Gamma, T_{i}) \setminus \Th(T_{i}) = \emptyset$ by Fact~\ref{HCfB1}. 
Thus we have a negative answer to the hereditary variant of Guaspari's problem for a pair of theories.

As a corollary to Facts \ref{HCfB1} and \ref{HCfB2}, we have: 

\begin{cor}[Bennet {\cite[Corollary 5]{Ben86}} or {\cite[Corollary 3.1.6]{Ben}}]\label{HCfB3}
For any theories $T_{0}$ and $T_{1}$, the following are equivalent:
\begin{enumerate}
	\item $\bigcap_{i \leq 1}\HCons(\Gamma, T_{i})\setminus\Th(T_{i})\neq \emptyset$;
	\item $T_1 + \ThG(T_0)$ and $T_0 + \ThG(T_1)$ are consistent.\qed
\end{enumerate}
\end{cor}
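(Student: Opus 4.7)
The plan is to chain the two preceding facts directly, since the corollary's statement is literally what falls out when one substitutes the characterization from Fact \ref{HCfB2} into the reduction provided by Fact \ref{HCfB1}. There is essentially no new mathematical content to supply; the argument is a two-line composition.

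First I would invoke Fact \ref{HCfB1} to rewrite the statement $\bigcap_{i \leq 1}(\HCons(\Gamma, T_i) \setminus \Th(T_i)) \neq \emptyset$ as the conjunction $\HCons(\Gamma, T_0) \setminus \Th(T_1) \neq \emptyset$ \ and\ $\HCons(\Gamma, T_1) \setminus \Th(T_0) \neq \emptyset$. Next I would apply Fact \ref{HCfB2} twice: once with $T := T_0$ and $U := T_1$, yielding $\HCons(\Gamma, T_0) \setminus \Th(T_1) \neq \emptyset \iff \ThG(T_0)+T_1$ is consistent; and once with $T := T_1$ and $U := T_0$, yielding $\HCons(\Gamma, T_1) \setminus \Th(T_0) \neq \emptyset \iff \ThG(T_1)+T_0$ is consistent. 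Combining these two equivalences gives the desired biconditional with clause (2).

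There is no real obstacle here, since both auxiliary facts have already been established earlier in the section and each is quoted in precisely the form needed. The only small thing to check is that the hypothesis ``$T_0$ and $T_1$ are theories'' in the corollary matches the hypotheses of Facts \ref{HCfB1} and \ref{HCfB2} (both take arbitrary theories in the sense of this paper, i.e.\ r.e.\ consistent extensions of $\PA$), so no side condition needs to be verified. Consequently the proof reduces to a single sentence citing the two facts and noting that their conjunction yields the stated equivalence.
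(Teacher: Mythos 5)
Your proposal is correct and is exactly the argument the paper intends: the corollary is stated immediately after the sentence ``As a corollary to Facts \ref{HCfB1} and \ref{HCfB2}, we have,'' so the paper's own (implicit) proof is precisely the two-step composition you describe. Nothing further is needed.
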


\section{Preliminaries}\label{Pre}

In this section, we review some basic definitions and facts. 

Apart from the formula classes $\sn$ and~$\pn$ with $n>0$,
we also recall the class $\Delta_{0}=\Sigma_{0}=\Pi_{0}$ with its usual
definition (H\'ajek and Pudl\'ak \cite[0.30]{HP}).
We~say that a formula $\varphi$ is $\Delta_1$ if it is $\Sigma_1$ and is provably equivalent to some $\Pi_1$~formula in~$\PA$.
Seeing as $\PA$ proves collection for each formula in its language, 
we~are going to freely use the fact that, up to $\PA$-provable equivalence,
all the formula classes mentioned above are closed under bounded quantification.

We can naturally describe a formula $\Prf(X, x, y)$ saying that ``a~formula with the g\"odelnumber $x$ has a proof with the g\"odelnumber $y$ from the set $X$ of assumptions'', where $X$ is an auxiliary second-order variable.  
For each formula $\sigma(v)$, let $\Prf_\sigma(x, y)$ be the formula obtained by replacing the subformula $v \in X$ from $\Prf(X, x, y)$ with $\sigma(v)$. 
Then, a \textit{standard proof predicate} for a theory $T$ is a formula of the form $\Prf_\sigma(x, y)$, where $\sigma(v)$ is a $\Delta_1$ formula defining a set of axioms for $T$ in the standard model of arithmetic. 
Let $\Prf_T(x, y)$ denote some standard proof predicate for $T$. 
The formula $\Prv_{T}(x) :\equiv \exists y\,\Prf_{T}(x, y)$ is called a \textit{standard provability predicate} for $T$. 
The formulas $\Prf_T(x, y)$ and $\Pr_T(x)$ are $\Delta_1$ and $\Sigma_1$, respectively. 
Then, it follows from the $\Sigma_1$-soundness and $\Sigma_1$-completeness of $\PA$ that for any formula $\varphi$, $T \vdash \varphi$ if and only if $\PA \vdash \Prv_T(\gn{\varphi})$. 
Our setup is essentially identical to the one in Lindstr\"om~\cite[pp.\,15--16]{Lin}.

We introduce the witness comparison notation (cf.~Guaspari and Solovay~\cite{GS}). 

\begin{defn}
For any formulas $\varphi \equiv \exists x\,\alpha(x)$ and $\psi \equiv \exists y\,\beta(y)$, 
\begin{itemize}
	\item $\varphi \preceq \psi \equiv \exists x \, (\alpha(x) \land \forall y < x\,\lnot \beta(y))$;
	\item $\varphi \prec \psi \equiv \exists x \, (\alpha(x) \land \forall y \leq x\,\lnot \beta(y))$.
\end{itemize}
\end{defn}

\begin{fact}[cf.~Lindstr\"om {\cite[Lemma 1.3]{Lin}}]\label{dual}
For any formulas $\varphi \equiv \exists x\,\alpha(x)$ and $\psi \equiv \exists y\,\beta(y)$, 
\begin{enumerate}
    \item $\PA\vdash\varphi\preceq\psi\to\varphi$;
	\item $\PA \vdash \lnot \left(\varphi \preceq \psi \land \psi \prec \varphi \right)$;
	\item $\PA \vdash (\varphi \lor \psi) \to (\varphi \preceq \psi \lor \psi \prec \varphi)$;
	\item $\PA \vdash \varphi \land \lnot \psi \to \varphi \prec \psi $.\qed
\end{enumerate}
\end{fact}

\begin{defn}\label{rPrDef}
Let $\Gamma(x)$ be a $\Delta_1$ formula naturally expressing that ``$x$~is the g\"odelnumber of a $\Gamma$ formula'', and let ${\rm True}_{\Gamma}(x)$ be a $\Gamma$ formula saying that ``$x$~is the g\"odelnumber of a true $\Gamma$ sentence'' (see H\'ajek and Pudl\'ak \cite[I.1(d)]{HP}). 
We~define the \emph{relativized proof predicate} 
\[
\Prf_{T}^{\Gamma}(x,y)\equiv
\exists u\leq y\,\bigl(\Gamma(u)\land{\rm True}_{\Gamma}(u)\land\Prf_{T}(u\dot{\to}x,y)\bigr) 
\]
(cf.~Lindstr\"om~\cite[p.\,63]{Lin}),
where the virtual term $v\dot{\to}w$ represents the function sending the 
g\"odelnumbers of two formulas to that of the implication between them by its
natural $\Delta_{1}$~definition.
Under any reasonable g\"odelnumbering, $\Prf_{T}(u\dot{\to}x,y)$ already implies
$u\leq y$.
Note that $\Prf_{T}^{\Gamma}(x,y)$ is ($\PA$-provably equivalent~to) a
$\Gamma$~formula.

The \emph{relativized provability predicate} is
\[
\Pr_{T}^{\Gamma}(x)\equiv\exists y\,\Prf_{T}^{\Gamma}(x,y)
\]
(see 
Smory\'nski~\cite[Definition 7.3.1]{Smo85}, 
H\'ajek and Pudl\'ak~\cite[III.4.23]{HP}, or Lindstr\"om~\cite[p.\,63]{Lin}).
Observe that both $\Pr_{T}^{\sn}(x)$ and, when $n>1$, 
$\Pr_{T}^{\pnmo}(x)$ are~$\sn$.
It~can be shown in~$\PA$ that $\Pr_{T}^{\Gamma}(x)$ is equivalent to
$\Pr_{T+\textrm{True}_{\Gamma}}(x)$ with the right-to-left direction
requiring an appropriate instance of collection.

In~the interest of uniformity of exposition,
we also allow the use of the $\Delta_{1}$~formula $\Prf_{T}^{\Delta_{0}}(x,y)$ 
and the $\Sigma_{1}$~formula $\Pr_{T}^{\Delta_{0}}(x)$
defined in full analogy.
\end{defn}

\begin{fact}
\label{rPr}
Let\/ $\varphi$ be an arbitrary sentence and\/ $\gamma$ any\/ $\Gamma$~sentence.
\begin{enumerate}
	\item $T\vdash\Prf_{T}^{\Gamma}(\gn{\varphi},\overline p)\to\varphi$ 
	     for each\/ $p\in\omega$;
	\item If\/ $T+\gamma\vdash\varphi$, 
	     then\/ $\PA+\gamma\vdash\Prf_{T}^{\Gamma}(\gn{\varphi},\overline p)$
	     for some\/ $p\in\omega$;
	\item $\PA+\gamma\vdash\Pr_{T}^{\Gamma}(\gn{\gamma})$;
	\item The formulas\/ $\Pr_{T}^{\sn}(x)$ and\/ $\Pr_{T}^{\pnmo}(x)$
	     are equivalent in\/~$\PA$;
	\item As are $\Pr_{T}^{\Sigma_{1}}(x)$ and\/ $\Pr_{T}(x)$.
\end{enumerate}
\end{fact}

\begin{proof}[Comments]
1: This is a relativized form of \emph{Small Reflection Principle}~---
see e.g.\ Lindstr\"om~\cite[Lemma~5.1(ii)]{Lin}.

2: See Lindstr\"om~\cite[Lemma~5.1(iii)]{Lin}.

3 follows at once from~2.

4: Any true $\sn$~sentence $\exists x\,\pi(x)$ is a consequence of some true
$\pnmo$~sentence of the form $\pi(\overline m)$ for some natural number~$m$.
This observation is formalizable in~$\PA$.

5 is a consequence of provable $\Sigma_{1}$~completeness 
(Lindstr\"om~\cite[Fact~1.9(d)]{Lin})
\end{proof}

Even though Fact~\ref{rPr}.4 tells us that the relativized provability predicates
$\Pr_{T}^{\sn}(\cdot)$ and $\Pr_{T}^{\pnmo}(\cdot)$ are equivalent, we still have uses for both these formulas because,
when used as terms in witness comparison,
they behave differently in view of unequal quantifier complexity of the underlying
relativized proof predicates $\Prf_{T}^{\sn}(\cdot,\cdot)$ and $\Prf_{T}^{\pnmo}(\cdot,\cdot)$.
This is briefly discussed in Smory\'nski~\cite[p.\,318]{Smo85}.

%
 
It is well-known that the $\Pi_n$ sentence $\neg\Prv_{T}^{\Sigma_{n}}(\gn{0=1})$ is $\PA$-provably equivalent to the uniform $\Pi_n$~reflection principle $\RFN_T(\Pi_{n})$ for~$T$. 
The next proposition is a generalization of Kreisel's $\Pi_{1}$-conservativity result. 

\begin{conv}\label{simConv}
For each $\Gamma$ sentence $\varphi$, let ${\sim}\varphi$ denote a $\Gamma^{d}$~sentence logically equivalent to 
$\lnot\varphi$. 
\end{conv}

\begin{prop}[H\'ajek {\cite[Proposition~3]{Haj79}} for $n=2$, or Blanck~{\cite[Corollary 4.32]{Bla}}]\label{lPTS}
For any theory $T$, $\Prv_{T}^{\Sigma_{n}}(\gn{0=1})\in\Cons(\Pi_{n},T)$.
\end{prop}

\begin{proof}
Let $\pi$ be any $\Pi_{n}$ sentence such that $T + \Prv_{T}^{\Sigma_{n}}(\gn{0=1})\vdash \pi$. 
Then
\[
T\vdash\lnot\pi\to\forall u\,\bigl(\Sigma_{n}(u)\land\mathrm{True}_{\Sigma_{n}}(u)\to\lnot\Prv_{T}(u\dot{\to}\gn{0=1})\bigr).
\]
Since ${\sim}\pi$ is 
a $\Sigma_{n}$ sentence, $T\vdash \lnot \pi \to \Sigma_{n}(\gn{{\sim}\pi})\land \mathrm{True}_{\Sigma_{n}}(\gn{{\sim}\pi})$. Therefore, 
\[
T \vdash \lnot \pi \to \lnot \Prv_{T}(\gn{{\sim}\pi\to0=1}).
\]
That is, $T \vdash \lnot \pi \to \lnot \Prv_{T}(\gn{\pi})$. Hence, $T \vdash \Prv_{T}(\gn{\pi}) \to \pi$. By L\"ob's theorem, we have $T \vdash \pi$. 
\end{proof}

The following fact is used in Section~\ref{Picons}. 

\begin{fact}[See Exercise 4.2 in Lindstr\"om \cite{Lin}]\label{nCon}
For any theory~$T$, there exists a standard provability predicate $\Prv_T(x)$ for $T$ such that for all $n > 0$, $T \nvdash \Prv_{T}^{\Sigma_{n}}(\gn{0=1})$. 
\qed
\end{fact}

Finally, we prove the following useful lemma.

\begin{lem}[See Misercque~{\cite[Proposition~2.5.3]{MisPhD}} for $\Gamma=\Pi_{1}$]\label{cCons}
For any theory~$T$ and for any\/ $\Gamma^{d}$~formulas $\varphi$ and $\psi$, 
\begin{enumerate}
	\item If $\varphi$, $\psi \in \Cons(\Gamma, T)$, then $\varphi \land \psi \in \Cons(\Gamma, T)$;
	\item If $\varphi \in \Cons(\Gamma, T)$ and $T + \varphi \vdash \psi$, then $\psi \in \Cons(\Gamma, T)$.
\end{enumerate}
\end{lem}

\begin{proof}
1: Suppose $\varphi$, $\psi \in \Cons(\Gamma, T)$. 
Let $\gamma$ be a $\Gamma$ sentence such that $T + \varphi \land \psi \vdash \gamma$.
Then, $T + \varphi \vdash \psi \to \gamma$ and $\psi \to \gamma$ is a $\Gamma$ sentence. 
Since $\varphi \in \Cons(\Gamma, T)$, we have $T \vdash \psi \to \gamma$. 
Since $\psi \in \Cons(\Gamma, T)$, we obtain $T \vdash \gamma$. 

2: Suppose $\varphi \in \Cons(\Gamma, T)$ and $T + \varphi \vdash \psi$. 
Let $\gamma$ be a $\Gamma$ sentence such that $T + \psi \vdash \gamma$.
Then, we have $T + \varphi \vdash \gamma$. 
Therefore, we obtain $T \vdash \gamma$. 
\end{proof}

\section{Generalizations of Bennet's results}\label{Gen}

In this section, we extend Bennet's results discussed in Section \ref{Back} to larger families of theories. 
This section consists of two subsections. 
In Subsection \ref{GCs}, we generalize Facts \ref{B1}, \ref{B2'} and \ref{B3'} to the case of more than two theories. 
In Subsection \ref{HGCs}, we handle Facts \ref{HCfB1} and \ref{HCfB2}.

\subsection{$\Gamma$-conservative sentences}\label{GCs}

First, we generalize Fact \ref{B1} to finite families of theories. 

\begin{thm}\label{GC1}
For any\/ $k \geq 1$ and theories\/ $T_{0},\ldots,T_{k}$, the following are equivalent:
\begin{enumerate}
	\item $\bigcap_{i \leq k}\Cons(\Gamma, T_{i}) \setminus \Th(T_{i}) \neq \emptyset$;
	\item For all $i \leq k$, $\bigl(\bigcap_{\substack{j \neq i \\ j \leq k}}\Cons(\Gamma, T_{j})\bigr) \setminus \Th(T_{i}) \neq \emptyset$.
\end{enumerate}
\end{thm}

\begin{proof}
$1 \Rightarrow 2$: This is trivial. 

$2 \Rightarrow 1$: Suppose for all $i\leq k$, $\bigl(\bigcap_{\substack{j\neq i \\ j\leq k}}\Cons(\Gamma,T_{j})\bigr)\setminus\Th(T_{i})\neq\emptyset$.

Case 1: $\Gamma = \Sigma_{n}$. 

For each $i \leq k$, let $\varphi_{i} \in \bigl(\bigcap_{\substack{j \neq i \\ j \leq k}}\Cons(\Sigma_n, T_{j})\bigr) \setminus \Th(T_{i})$ and let $\theta_{i}$ be $\Pi_{n}$~sentences satisfying the following equivalences:
\[
\PA\vdash\theta_{i}\leftrightarrow\bigwedge_{\substack{j\neq i\\j \leq k}}\varphi_{j}\land 
\neg\Biggl(\Pr_{T_{i}}\biggl(\gn{\bigvee_{j\leq k}\theta_{j}}\biggr)
\prec\Pr_{T_{i}}^{\pn}\bigl(\gn{\neg\theta_{i}}\bigr)\Biggr).
\]
We show $\bigvee_{j \leq k} \theta_{j} \in \bigcap_{i \leq k} \Cons(\Sigma_{n}, T_{i}) \setminus \Th(T_{i})$.

First, we prove $T_{i} \nvdash \bigvee_{j \leq k} \theta_{j}$ for all $i \leq k$. 
Assume there is an $i^\ast \leq k$ such that $T_{i^\ast} \vdash \bigvee_{j \leq k} \theta_{j}$, then there is a $p \in \omega$ such that $\PA \vdash \Prf_{T_{i^\ast}}\bigl(\gn{\bigvee_{j \leq k} \theta_{j}},\overline{p}\bigr)$. 
Also, by Fact~\ref{rPr}.1, 
$T_{i^\ast}+\theta_{i^\ast}\vdash\forall y\leq\overline p\,\neg\Prf_{T_{i^{*}}}^{\pn}(\gn{\neg\theta_{i^{*}}},y)$. 
Then, 
\[
T_{i^\ast}+\theta_{i^\ast}\vdash\Pr_{T_{i^{*}}}\biggl(\gn{\bigvee_{j\leq k}\theta_{j}}\biggr)
\prec\Pr_{T_{i^{*}}}^{\pn}\bigl(\gn{\neg\theta_{i^{*}}}\bigr).
\] 
Hence, by the choice of $\theta_{i^\ast}$, $T_{i^\ast}+\theta_{i^\ast} \vdash \lnot \theta_{i^\ast}$. 
That is, $T_{i^\ast}\vdash \lnot \theta_{i^\ast}$. 
By our assumption, $T_{i^\ast} \vdash \bigvee_{\substack{j \neq i^\ast \\ j \leq k}} \theta_{j}$. 
For any $j \leq k$ with $j \neq i^\ast$, $\PA \vdash \theta_{j} \to \varphi_{i^\ast}$ by the choice of $\theta_{j}$. 
Therefore $T_{i^\ast} \vdash \bigvee_{\substack{j \neq i^\ast \\ j \leq k}} \theta_{j} \to \varphi_{i^\ast}$, and hence $T_{i^\ast} \vdash \varphi_{i^\ast}$. 
This contradicts the choice of~$\varphi_{i^{*}}$, which shows
$T_{i} \nvdash \bigvee_{j \leq k} \theta_{j}$. 

Next, we show $\bigvee_{j \leq k} \theta_{j} \in \Cons(\Sigma_{n}, T_{i})$ for all $i \leq k$. 
Fix an arbitrary $i\leq k$.
Let $\sigma$ be a $\Sigma_{n}$ sentence such that  $T_{i}+\bigvee_{j \leq k} \theta_{j} \vdash \sigma$. 
Then $T_{i} + \theta_{i} \vdash \sigma$. 
Therefore, by Fact~\ref{rPr}.2, there is a $q \in \omega$ such that 
$\PA+\lnot\sigma\vdash\Prf_{T_{i}}^{\pn}(\gn{\neg\theta_{i}},\overline{q})$.
Since $T_{i}\nvdash\bigvee_{j\leq k}\theta_{j}$, one has $\PA \vdash \forall y < \overline{q} \, \lnot \Prf_{T_{i}}\bigl(\gn{\bigvee_{j \leq k} \theta_{j}}, y\bigr)$. 
Hence 
\[
	\PA+ \lnot \sigma \vdash\neg\Biggl(\Pr_{T_{i}}\biggl(\gn{\bigvee_{j\leq k}\theta_{j}}\biggr)
\prec\Pr_{T_{i}}^{\pn}\bigl(\gn{\neg\theta_{i}}\bigr)\Biggr).
\] 
Therefore, $\PA + \lnot \sigma + \bigwedge_{\substack{j\neq i \\ j \leq k}}\varphi_{j} \vdash \theta_{i}$ by the choice of $\theta_{i}$. 
Since $T_{i} + \theta_{i} \vdash \sigma$, $T_{i} + \lnot \sigma + \bigwedge_{\substack{j \neq i \\ j \leq k}}\varphi_{j} \vdash \sigma$, 
so $T_{i} + \bigwedge_{\substack{j \neq i \\ j \leq k}}\varphi_{j} \vdash \sigma$. 
Since $\bigwedge_{\substack{j \neq i \\ j \leq k}}\varphi_{j} \in \Cons(\Sigma_n, T_i)$ by Lemma \ref{cCons}.1, we obtain $T_{i} \vdash \sigma$. 

Case 2: $\Gamma = \Pi_{n}$. 

For each $i \leq k$, let $\varphi_{i} \in \bigl(\bigcap_{\substack{j \neq i \\ j \leq k}}\Cons(\Pi_{n}, T_{j})\bigr) \setminus \Th(T_{i})$ and let $\theta_{i}$ be $\Sigma_{n}$~sentences satisfying 
\[
\PA \vdash \theta_{i} \leftrightarrow \bigwedge_{\substack{j \neq i \\ j \leq k}} \varphi_{j} \land
\Biggl(\Pr_{T_{i}}^{\sn}\bigl(\gn{\neg\theta_{i}}\bigr)
\prec\Pr_{T_{i}}\biggl(\gn{\bigvee_{j\leq k}\theta_{j}}\biggr)\Biggr).
\]
By almost the same argument as in Case 1, we find 
\[
\bigvee_{j \leq k} \theta_{j} \in \bigcap_{i \leq k}\Cons(\Pi_{n}, T_{i}) \setminus \Th(T_{i}).\qedhere
\]
\end{proof}

Next, we generalize Fact \ref{B2'}. 
In the case of two theories, Fact \ref{B2'} gives two sufficient conditions $\ThD(T) \nsubseteq \Th(U)$ and ``$U + \ThG(T)$ is consistent'' for $\Cons(\Gamma, T) \setminus \Th(U) \neq \emptyset$. 
These two conditions adapt straightforwardly to the case of r.e.~families of theories $\{T_{i}\}_{i\in\J}$ as the conditions $\bigcap_{i \in\J}\ThD(T_{i}) \not \subseteq \Th(U)$ and ``$U + \bigcup_{i \in\J}\ThG(T_{i})$ is consistent'', respectively. 
We can show that each of these generalized conditions implies $\bigcap_{i \in\J}\Cons(\Gamma, T_{i})\setminus \Th(U) \neq \emptyset$. 
Moreover, we found the following new condition which is also sufficient for 
$\bigcap_{i \in\J}\Cons(\Gamma, T_{i})\setminus \Th(U) \neq \emptyset$: 

\begin{description}	
	\item [B1]: There is an r.e.~set $X \subseteq\J$ such that 
\[
	\bigcap_{i \in\J\setminus X}\ThD(T_{i}) \not \subseteq \Th \biggl(U + \bigcup_{i \in X}\ThG(T_{i})\biggr).
\]
\end{description}

Here, $\bigcap_{i \in \emptyset}\ThD(T_{i})$ is the set of all sentences. 
Hence, the consistency of $U + \bigcup_{i \in\J}\ThG(T_{i})$ implies \textbf{B1} because $\J$ is r.e. 
Also, $\bigcap_{i \in\J}\ThD(T_{i}) \not \subseteq \Th(U)$ implies \textbf{B1} because $\emptyset$ is r.e. 
Therefore, the following theorem is indeed a generalization of Fact \ref{B2'}. 

\begin{thm}\label{OSC}
Let $\{T_{i}\}_{i \in\J}$ be any r.e.~family of theories. 
If condition\/~\textbf{\upshape B1} holds for $\{T_i\}_{i \in\J}$, 
then $\bigcap_{i \in\J}\Cons(\Gamma, T_{i})\setminus \Th(U) \neq \emptyset$.
\end{thm}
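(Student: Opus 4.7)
The plan is to combine a witness from condition \textbf{C1} with an application of Theorem \ref{GHC2}, glued together by a disjunction. Using \textbf{C1}, pick an r.e.~set $X \subseteq \omega$ and a $\Gamma^{d}$ sentence $\psi$ such that $T_{i} \vdash \psi$ for every $i \in \omega \setminus X$ and $S := \bigcup_{i \in X} \ThG(T_{i}) + U \nvdash \psi$; in the degenerate case $\omega \setminus X = \emptyset$ we instead choose any $\Gamma^{d}$ sentence outside $\Th(S)$, which exists because $S$ is consistent. Consequently, $\bigcup_{i \in X} \ThG(T_{i}) + U + \lnot \psi$ is a consistent r.e.~extension of $\PA$.

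Next, I apply Theorem \ref{GHC2} to the r.e.~sequence $\{T_{i}\}_{i \in X}$, re-indexed as an $\omega$-sequence (padding by a single theory if $X$ is finite, and by $\PA$ if $X$ is empty), together with the theory $U + \lnot \psi$. By the consistency of $\bigcup_{i \in X} \ThG(T_{i}) + (U + \lnot \psi)$, Theorem \ref{GHC2} yields a $\Gamma^{d}$ sentence $\rho$ with $\rho \in \bigcap_{i \in X} \HCons(\Gamma, T_{i})$ and $U + \lnot \psi \nvdash \rho$, equivalently $U \nvdash \psi \lor \rho$.

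Finally, set $\varphi := \psi \lor \rho$; this lies in $\Gamma^{d}$ by closure under disjunction. The three properties follow: (a) $U \nvdash \varphi$, since otherwise $U + \lnot \psi \vdash \rho$, contradicting the choice of $\rho$; (b) for $i \in \omega \setminus X$, $T_{i} \vdash \psi$ gives $T_{i} \vdash \varphi$, and so $\varphi$ is trivially $\Gamma$-conservative over $T_{i}$; (c) for $i \in X$, if $T_{i} + \varphi \vdash \sigma$ with $\sigma \in \Gamma$, then $T_{i} \vdash (\psi \lor \rho) \to \sigma$, hence $T_{i} + \rho \vdash \sigma$, and since $\rho \in \HCons(\Gamma, T_{i}) \subseteq \Cons(\Gamma, T_{i})$ we conclude $T_{i} \vdash \sigma$.

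The main conceptual point is recognizing that disjunction is the right combinator: it uses $T_{i}$-provability of $\psi$ to absorb indices $i \notin X$ automatically, while reducing the conservativity obligation over $T_{i}$ for $i \in X$ to conservativity of $\rho$ alone (a disjunctive antecedent splits into separate implications). The anticipated difficulty, namely juggling $\Gamma$-conservativity for infinitely many $T_{i}$'s at once, is thereby delegated: Theorem \ref{GHC2} handles the $X$-indices uniformly and $\psi$ handles the complement trivially, and the only residual care is the standard verification that $\Gamma^{d}$ is closed under $\lor$ together with the degenerate cases $X = \emptyset$ or $\omega \setminus X = \emptyset$, which collapse to $\varphi = \psi$ and $\varphi = \rho$ respectively.
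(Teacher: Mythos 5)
Your proof is correct and takes essentially the same route as the paper's: extract a $\Gamma^{d}$ witness of \textbf{C1}, obtain a hereditarily $\Gamma$-conservative sentence from the consistency of $\bigcup_{i \in X}\ThG(T_{i}) + U + \lnot\psi$, and disjoin the two, with exactly the same three-part verification. The only difference is one of packaging: you invoke Theorem \ref{GHC2} for the re-indexed sequence $\{T_{i}\}_{i \in X}$, whereas the paper applies Fact \ref{HCfB2} to the single theory $\PA + \bigcup_{i \in X}\ThG(T_{i})$ and then transfers conservativity to each $T_{i}$ by hand via the $\psi \to \gamma$ trick --- which is precisely the argument already contained in the proof of Theorem \ref{GHC2}, so the two proofs coincide modulo modularization.
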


\begin{proof}
Let $X \subseteq\J$ be an r.e.~set such that
\[
	\bigcap_{i \in\J\setminus X}\ThD(T_{i}) \not \subseteq \Th \biggl(U + \bigcup_{i \in X}\ThG(T_{i})\biggr). 
\]
Then there is a $\Gamma^{d}$ sentence $\varphi$ satisfying the following two conditions:
\begin{enumerate}
	\item $\varphi \in \bigcap_{i \in\J\setminus X}\ThD(T_{i})$;
	\item $U + \bigcup_{i \in X}\ThG(T_{i}) + \lnot \varphi$ is consistent.
\end{enumerate}

Let $T := \PA + \bigcup_{i \in X}\ThG(T_{i})$. 
Since $X$ is an r.e.~set, $T$ is a consistent r.e.~extension of $\PA$. 
Also, since $U + \bigcup_{i \in X}\ThG(T_{i}) + \lnot \varphi \vdash U + \ThG(T) + \lnot \varphi$, we have that $U + \ThG(T) + \lnot \varphi$ is consistent. 
Therefore, there is a 
\[
\psi \in \HCons(\Gamma, T) \setminus \Th(U+\lnot \varphi)
\]
by Fact \ref{HCfB2}.

We prove $\varphi\lor\psi\in\bigcap_{i\in\J}\Cons(\Gamma,T_{i})\setminus\Th(U)$. 

Since $U + \lnot \varphi \nvdash \psi$, we obviously obtain $U \nvdash \varphi \lor \psi$. 
We prove $\varphi \lor \psi \in \Cons(\Gamma, T_{i})$ for any $i \in\J$. 
For $i \in\J\setminus X$, trivially $\varphi \lor \psi \in \Cons(\Gamma, T_{i})$ because $T_{i} \vdash \varphi$. 
For $i \in X$, let $\gamma$ be any $\Gamma$ sentence such that $T_{i} + \psi \vdash \gamma$. 
Then $T_{i} \vdash \psi \to \gamma$. 
Since $\psi\to\gamma$ is a $\Gamma$ sentence, $\PA+\ThG(T_{i})+\psi\vdash\gamma$. 
Also, since $T\vdash\PA+\ThG(T_{i})\vdash\PA$, we obtain $\PA+\ThG(T_{i})\vdash\gamma$ by the hereditary $\Gamma$-conservativity of~$\psi$. 
Thus $T_{i} \vdash \gamma$. 
Hence, $\psi \in \Cons(\Gamma, T_{i})$ and so $\varphi \lor \psi \in \Cons(\Gamma, T_{i})$ by Lemma \ref{cCons}.2. 
Therefore, $\varphi \lor \psi \in \bigcap_{i \in\J}\Cons(\Gamma, T_{i})$. 
\end{proof}

In Theorem \ref{GC2}, we will reverse the implication of Theorem~\ref{OSC} for $\Gamma = \Sigma_n$ and finite families of theories. 

We spell out a corollary of Theorem~\ref{OSC} for finite subfamilies of an infinite r.e.~family $\{T_{i}\}_{i \in \omega}$. 

\begin{cor}\label{4conditions}
Let\/ $\{T_{i}\}_{i \in \omega}$ be an infinite r.e.~family of theories and\/ $U$ be a theory. 
If there exists a set $X \subseteq \omega$ such that 
\[
\bigcap_{i \in \omega \setminus X} \Th_{\Gamma^d}(T_i) \nsubseteq \Th\biggl(U + \bigcup_{i \in X} \ThG(T_i)\biggr),
\] then for all $k \in \omega$, $\bigcap_{i \leq k}\Cons(\Gamma, T_{i})\setminus \Th(U) \neq \emptyset$. 
\end{cor}

\begin{proof}
Suppose that 
\[
\bigcap_{i \in \omega \setminus X}\ThD(T_{i}) \not \subseteq \Th \biggl(U + \bigcup_{i \in X}\ThG(T_{i})\biggr)
\]
for some $X \subseteq \omega$.
We fix a $k \in \omega$ and let $X' := X \cap {\mathrm I}_{k}$. 
Then 
\[
\bigcap_{i \in {\mathrm I}_{k} \setminus X'}\ThD(T_{i}) \not \subseteq \Th \biggl(U + \bigcup_{i \in X'}\ThG(T_{i})\biggr).
\] 
Therefore, $\bigcap_{i \leq k}\Cons(\Gamma, T_{i})\setminus \Th(U) \neq \emptyset$ by Theorem \ref{OSC}. 
\end{proof}

On the other hand, for infinite families, condition \textbf{B1} does not follow from the existence of an $X \subseteq \omega$ such that 
\[
\bigcap_{i \in \omega \setminus X} \Th_{\Gamma^d}(T_i) \nsubseteq \Th\biggl(U + \bigcup_{i \in X} \ThG(T_i)\biggr)
\]
in general. 
This will be shown in Corollary~\ref{MSCex}. 
Thus we do not know whether the assumption `$X$ is r.e.' in the statement of Theorem \ref{OSC} can be removed or not. 
Let us however show that the part of $X$ can always be played by a $\Pi_{1}$ set. 

\begin{prop}
Let $\{T_{i}\}_{i \in \omega}$ be an infinite r.e.~family of theories. 
If 
\[
\bigcap_{i \in \omega \setminus X} \Th_{\Gamma^d}(T_i) \nsubseteq \Th\biggl(U + \bigcup_{i \in X} \ThG(T_i)\biggr)
\]
for some $X \subseteq \omega$, then 
\[
\bigcap_{i \in \omega \setminus X'} \Th_{\Gamma^d}(T_i) \nsubseteq \Th\biggl(U + \bigcup_{i \in X'} \ThG(T_i)\biggr)
\]
for some $\Pi_{1}$ set $X' \subseteq \omega$. 
\end{prop}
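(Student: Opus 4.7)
The plan is to keep the same witness sentence $\varphi$ and shrink $X$ to the set of indices that do not prove $\varphi$. Concretely, suppose $X\subseteq\omega$ and a $\Gamma^{d}$ sentence $\varphi$ witness $\bigcap_{i\in\omega\setminus X}\ThD(T_{i})\not\subseteq\Th\bigl(\bigcup_{i\in X}\ThG(T_{i})+U\bigr)$; so $T_{i}\vdash\varphi$ for every $i\in\omega\setminus X$, and $\bigcup_{i\in X}\ThG(T_{i})+U+\lnot\varphi$ is consistent. Define
\[
X':=\{\, i\in\omega : T_{i}\nvdash\varphi\,\}.
\]

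First I would check that $X'$ is $\Pi_{1}$. Since $\{T_{i}\}_{i\in\omega}$ is an r.e.~sequence, the predicate ``$T_{i}\vdash\varphi$'' in the variable $i$ is r.e., so its complement $X'$ is $\Pi_{1}$. Next I would verify $X'\subseteq X$: any $i\in\omega\setminus X$ satisfies $T_{i}\vdash\varphi$ and is therefore not in $X'$.

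Then I would re-verify the two required properties with $X'$ and the same $\varphi$. For every $i\in\omega\setminus X'$ we have $T_{i}\vdash\varphi$ directly from the definition of $X'$, so $\varphi\in\bigcap_{i\in\omega\setminus X'}\ThD(T_{i})$. For consistency, note that $X'\subseteq X$ gives $\bigcup_{i\in X'}\ThG(T_{i})\subseteq\bigcup_{i\in X}\ThG(T_{i})$, and hence $\bigcup_{i\in X'}\ThG(T_{i})+U+\lnot\varphi$ is a subtheory of the consistent theory $\bigcup_{i\in X}\ThG(T_{i})+U+\lnot\varphi$, so it is itself consistent. Thus $\varphi$ witnesses $\bigcap_{i\in\omega\setminus X'}\ThD(T_{i})\not\subseteq\Th\bigl(\bigcup_{i\in X'}\ThG(T_{i})+U\bigr)$ with $X'$ a $\Pi_{1}$ set.

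There is essentially no obstacle; the only point requiring a moment of care is ensuring that the r.e.~hypothesis on the sequence is actually used to put $X'$ in $\Pi_{1}$, rather than merely in some higher class. The passage from an arbitrary $X$ to the canonical $X'$ depending on $\varphi$ is what forces the complexity bound.
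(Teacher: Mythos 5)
Your proposal is correct and is essentially identical to the paper's proof: the same witness $X'=\{i\in\omega : T_{i}\nvdash\varphi\}$, the same observation that $X'$ is $\Pi_{1}$ because the sequence is r.e., and the same use of $X'\subseteq X$ to transfer non-provability (phrased via consistency of $\bigcup_{i\in X}\ThG(T_{i})+U+\lnot\varphi$, which is equivalent). No issues.
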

\begin{proof}
Suppose $\varphi \in \bigcap_{i \in \omega \setminus X} \Th_{\Gamma^d}(T_i)$ and $U + \bigcup_{i \in X} \ThG(T_i) \nvdash \varphi$. 
Let $X' : = \{i \in \omega : T_i \nvdash \varphi\}$. 
Then $X'$ is a $\Pi_{1}$ set because $(T_i)_{i \in \omega}$ is a uniformly r.e.~sequence. 
Obviously $\varphi \in \bigcap_{i \in \omega \setminus X'} \Th_{\Gamma^d}(T_i)$. 
If $i \notin X$, then $T_i \vdash \varphi$, and hence $i \notin X'$. 
This means $X' \subseteq X$, and thus $U + \bigcup_{i \in X'} \ThG(T_i)$ is a subtheory of $U + \bigcup_{i \in X} \ThG(T_i)$. 
Therefore $U + \bigcup_{i \in X'} \ThG(T_i) \nvdash \varphi$. 
We conclude 
\[
\bigcap_{i \in \omega \setminus X'} \Th_{\Gamma^d}(T_i) \nsubseteq \Th\biggl(U + \bigcup_{i \in X'} \ThG(T_i)\biggr).\qedhere
\] 
\end{proof}

At last, after an auxiliary lemma, we generalize the equivalence of Fact \ref{B3'} to all finite families of theories. 

\begin{lem}\label{mililemmaduo}
Suppose $\psi\in \Cons(\Sigma_n,T)$ and $\sigma\in \Th_{\Sigma_n}(T)$.
Then $T$ proves $\neg\, ({\sim}\psi \preceq \sigma)$.
{\upshape(See Convention~\ref{simConv} for~${\sim}\psi$.)}
\end{lem}

\begin{proof}
Suppose $\psi\in \Cons(\Sigma_n,T)$ and $\sigma\in \Th_{\Sigma_n}(T)$.
Since $T \vdash \sigma$, we have $T + \psi \vdash \sigma \prec {\sim}\psi$ by Fact \ref{dual}.4, and hence,
by the $\Sigma_n$-conservativity of $\psi$, $T \vdash  \sigma \prec {\sim}\psi$.
Therefore, $T \vdash \neg\, ({\sim}\psi \preceq \sigma)$ by Fact \ref{dual}.2.
\end{proof}

\begin{thm}\label{GC2}
Let $k \in \omega$ and let $T_{0},\ldots,T_{k}$ and $U$ be theories. 
Then the following are equivalent:
\begin{enumerate}
	\item $\bigcap_{i \leq k}\Cons(\Sigma_{n},T_{i})\setminus\Th(U)\neq\emptyset$;
	\item There is an $X \subseteq {\mathrm I}_{k}$ such that
\[\bigcap_{i \in {\mathrm I}_{k} \setminus X}\ThP(T_{i}) \not \subseteq \Th \biggl(U + \bigcup_{i \in X}\ThS(T_{i})\biggr).\]
\end{enumerate}
\end{thm}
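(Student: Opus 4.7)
My plan is as follows. For the direction $2 \Rightarrow 1$, I would apply Corollary~\ref{4domoe}. Given $X \subseteq I_k$ witnessing condition 2, I extend the finite sequence $\{T_i\}_{i \leq k}$ to an r.e.~sequence $\{T_i\}_{i \in \omega}$ by setting $T_i := \PA$ for $i > k$, and put $X^{+} := X \cup \{i \in \omega : i > k\}$. Because $\ThS(\PA) \subseteq \ThS(T_i)$ for every $i \leq k$ and $\ThS(\PA) \subseteq \Th(U)$, both $\bigcap_{i \in \omega \setminus X^{+}} \ThP(T_i)$ and $\Th\bigl(\bigcup_{i \in X^{+}} \ThS(T_i) + U\bigr)$ coincide with their finite-index counterparts appearing in condition~2. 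Hence Corollary~\ref{4domoe} applies and delivers $\bigl(\bigcap_{i \leq k} \Cons(\Sigma_n, T_i)\bigr) \setminus \Th(U) \neq \emptyset$.

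For $1 \Rightarrow 2$, I would argue by induction on $k$. The base case $k = 0$ is exactly Fact~\ref{B3'}. For the inductive step, suppose the result holds for $k-1$ and let $\varphi \in \bigcap_{i \leq k} \Cons(\Sigma_n, T_i) \setminus \Th(U)$. Since $\varphi$ belongs to the intersection over $i \leq k-1$ as well, the inductive hypothesis supplies a set $X' \subseteq I_{k-1}$ together with a $\Pi_n$ sentence $\pi'$ such that $\pi' \in \bigcap_{i \in I_{k-1} \setminus X'} \ThP(T_i)$ and $\pi' \notin \Th(U')$, where $U' := \bigcup_{i \in X'} \ThS(T_i) + U$. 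If $T_k \vdash \pi'$, then $(X', \pi')$ already witnesses condition~2 for $I_k$. Otherwise, I invoke Fact~\ref{B3'} applied to the pair $(T_k, U')$: after a short adjustment ensuring $\varphi \notin \Th(U')$, the witness $\varphi \in \Cons(\Sigma_n, T_k) \setminus \Th(U')$ produces either some $\pi_k \in \ThP(T_k) \setminus \Th(U')$ or the consistency of $\ThS(T_k) + U'$. In the second case, $X := X' \cup \{k\}$ is a candidate witness for condition~2, and the needed $\Pi_n$ sentence is obtained by combining $\pi'$ with a hereditarily $\Sigma_n$-conservative sentence supplied by Theorem~\ref{HCfB2} applied to the consistent theory $\bigcup_{X' \cup \{k\}} \ThS(T_i) + U$.

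The main obstacle lies in the remaining subcase, in which $T_k \nvdash \pi'$ and we possess both $\pi'$ and an auxiliary $\pi_k \in \ThP(T_k) \setminus \Th(U')$. We need a single $\Pi_n$ sentence belonging to $\bigcap_{i \in I_k \setminus X'} \ThP(T_i) \setminus \Th(U')$, yet neither $\pi' \lor \pi_k$ nor $\pi' \land \pi_k$ is guaranteed to succeed: the disjunction may be provable in $U'$, and the conjunction may fail to be provable in some $T_i$ with $i \in I_{k-1} \setminus X'$. I expect to resolve this via a Rosser-style $\Pi_n$ fixed-point construction in the spirit of the sentences $\theta_i$ used in the proof of Theorem~\ref{GHC1}, built from the formulas $[\Sigma_n]_{T_k}(\gn{\varphi}, \cdot)$ and the provability predicate $\Prf_{U'}$; the $\Sigma_n$-conservativity of $\varphi$ over each $T_i$ (via Fact~\ref{Gam}) is the essential ingredient for verifying the provability side of the fixed-point argument.
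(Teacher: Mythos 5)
Your direction $2 \Rightarrow 1$ is correct and is essentially the paper's: the paper simply cites Theorem \ref{OSC} after observing that every finite set is r.e., and your padding of the sequence with $\PA$ and of $X$ with the new indices is a clean way to make that reduction explicit.

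The direction $1 \Rightarrow 2$ has a genuine gap, and in fact two. First, the ``short adjustment ensuring $\varphi \notin \Th(U')$'' is not available: $U' = \bigcup_{i \in X'}\ThS(T_i) + U$ is in general a proper extension of $U$, and if $U' \vdash \varphi$ you have no witness for $\Cons(\Sigma_{n}, T_k) \setminus \Th(U') \neq \emptyset$; that set may simply be empty, in which case Fact \ref{B3'} yields $\ThP(T_k) \subseteq \Th(U')$ \emph{and} the inconsistency of $\ThS(T_k) + U'$, and neither of your two subcases is entered. Your induction gives you one particular $X' \subseteq I_{k-1}$, but the set $X$ that works for $I_k$ need not be $X'$ or $X' \cup \{k\}$; which $X$ succeeds depends globally on all the theories at once, which is why a one-theory-at-a-time induction does not obviously go through. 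Second, the combination step you flag as ``the main obstacle'' is really the entire content of the theorem, and ``I expect to resolve this via a Rosser-style fixed point'' is not a proof: the single sentence you need must lie in $\ThP(T_i)$ for \emph{every} $i \in I_{k-1} \setminus X'$ simultaneously while escaping $\Th(U')$, and it is not clear how a fixed point built only from $[\Sigma_n]_{T_k}(\gn{\varphi}, \cdot)$ and $\Prf_{U'}$ would use the $\Sigma_n$-conservativity of $\varphi$ over the remaining $T_i$. The paper avoids both problems by arguing the contrapositive globally: assuming $\bigcap_{i \in I_k \setminus X}\ThP(T_i) \subseteq \Th\bigl(\bigcup_{i \in X}\ThS(T_i) + U\bigr)$ for \emph{all} $X \subseteq I_k$, it builds by induction a whole tree of $\Sigma_n$ sentences $\varphi_{\seq{i_0, \ldots, i_t}}$ indexed by sequences of distinct indices, compares their witnesses against a witness for $\lnot\psi$ via Fact \ref{dual}, and concludes $U \vdash \psi$ for every $\psi \in \bigcap_{i \leq k}\Cons(\Sigma_n, T_i)$. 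You would need either to carry out that global construction or to supply the missing fixed-point argument in full (including the missing case $\Cons(\Sigma_{n}, T_k) \setminus \Th(U') = \emptyset$) before this direction can be considered proved.
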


\begin{proof}

2 $\Rightarrow$ 1: Since every finite set is r.e., this follows from Theorem \ref{OSC}. 

1 $\Rightarrow$ 2: Suppose $\bigcap_{i \in {\mathrm I}_{k} \setminus X}\ThP(T_{i}) \subseteq \Th \bigl(U + \bigcup_{i \in X}\ThS(T_{i})\bigr)$ for all $X \subseteq {\mathrm I}_{k}$.
Consider any $\psi \in \bigcap_{i \leq k}\Cons(\Sigma_{n}, T_{i})$. 
We~aim to show $U \vdash \psi$.

We first prove a claim involving collections of sentences indexed by certain sequences.
These sequences take elements from~$\mathrm{I}_{k}= \{ 0,\ldots,k\}$ and they
are \emph{injective} in the sense
that no repetitions are allowed.  We fix the following notation:
\begin{itemize}
\item
$\varepsilon$ is the null sequence.
 \item
 $\subseteq$ is the prefix relation.
 \item
 $[\bseq s]$ is the set of all elements of $\bseq s$.
  (If we model sequences as functions on finite ordinals,
 we could also say: $[\bseq s]$ is the range of $\bseq s$.)
 \item
 $\bseq s i$ is the result of appending $i$ to $\bseq s$.
 We demand of course that $i\in \mathrm{I}_k \setminus [\bseq s]$.
 \item
  $\mathrm L_k$ is the set of all injective sequences with elements in~${\mathrm I}_{k}$, and $\mathrm L_k^+ := \mathrm L_k \setminus \{ \varepsilon\}$.
 \end{itemize}

\begin{cl}
There is a family\/ $(\varphi_{\bseq r})_{\bseq r\in \mathrm L_{k}^+}$ of\/ $\Sigma_n$~sentences\/ 
such that for all sequences\/~$\bseq t$ in\/~$\mathrm L_{k}$, one has:
\begin{itemize}
	\item[{\rm (i)}]  $T_{i} \vdash \varphi_{\bseq ti}$, for $i\in \mathrm{I}_k\setminus [\bseq t]$;
	\item[{\rm (ii)}] $U \vdash \bigwedge_{j \in \mathrm{I}_k \setminus [\bseq t]} \varphi_{\bseq t j} \to 
	\bigvee_{\varepsilon \neq\bseq r \subseteq \bseq t }\lnot\,({\sim}\psi \preceq \varphi_{\bseq r})$.
\end{itemize}
\end{cl}

\begin{proof}
We proceed by upward induction on~$\bseq t$ in~$(\mathrm L_{k},\mathord\subseteq)$.
Thus we consider an arbitrary element $\bseq t\in\mathrm L_{k}$ assuming
the sentences $\varphi_{\bseq r}$ satisfying condition~(i) of the claim
have already been procured for each non-null $\bseq r\subseteq\bseq t$.

Suppose $\bseq si\subseteq\bseq t$, where $\bseq s$ may be null.
Since, by condition (i) of the induction hypothesis, 
$T_{i} \vdash \varphi_{\bseq si}$, and since $\psi$ is $\Sigma_n$-conservative over $T_i$,
 we have,  by Lemma~\ref{mililemmaduo},  $T_{i} \vdash \lnot \,({\sim}\psi \preceq \varphi_{\bseq si})$.
We conclude that $T_{i} \vdash \bigvee_{\varepsilon \neq \bseq r \subseteq \bseq t} \lnot\,({\sim}\psi \preceq \varphi_{\bseq r})$ for any $i \in [\bseq t]$. 

With $X={\mathrm I}_{k} \setminus [\bseq t]$, our supposition reads 
\[\bigcap_{i\in [\bseq t]} \ThP(T_{i}) \subseteq \Th \biggl(U + \bigcup_{j\in \mathrm{I}_k\setminus [\bseq t]}\ThS(T_{j})\biggr).\]
Hence, $U + \bigcup_{j\in \mathrm{I}_k \setminus[\bseq t]}\ThS(T_{j}) \vdash \bigvee_{\varepsilon\neq \bseq r \subseteq \bseq t} \lnot \,({\sim}\psi\preceq \varphi_{\bseq r})$. 
Thus, for each $j\in {\mathrm I}_{k}\setminus [\bseq t]$, there is a $\Sigma_{n}$ sentence $\varphi_{\bseq tj}$ 
such that 
$T_{j} \vdash \varphi_{\bseq tj}$ and \[U \vdash \bigwedge_{j \in {\mathrm I}_{k} \setminus [\bseq t]} 
\varphi_{\bseq tj} \to \bigvee_{\varepsilon\neq \bseq r \subseteq \bseq t} \lnot \,({\sim}\psi \preceq \varphi_{\bseq r}).\] This shows (i) and (ii) for~$\bseq t$. 
\end{proof}

We resume the proof of 1~$\Rightarrow$~2. 
We show by downward induction on~$\bseq s$ in~$(\mathrm L_{k},\mathord\subseteq)$
that 
$U + \lnot \psi \vdash \bigvee_{\varepsilon \neq \bseq r \subseteq \bseq s } \varphi_{\bseq r}$.
The desired outcome that $U \vdash \psi$ is then
immediate from the case where $\bseq s = \varepsilon$.

Recall that 
\begin{equation}\label{A}
U+\neg\psi\vdash\lnot\,({\sim}\psi\preceq \sigma)\to\sigma
\end{equation}
for any $\Sigma_{n}$~sentence~$\sigma$ by Fact \ref{dual}.4. 

Assume $\bseq s$ is $\subseteq$-maximal, that is, $[\bseq s]={\mathrm I}_{k}$.
Then, $U \vdash \bigvee_{\varepsilon \neq \bseq r \subseteq \bseq s}\lnot\,({\sim}\psi\preceq \varphi_{\bseq r})$ by condition (ii) of the claim, 
and $U + \lnot \psi \vdash \bigvee_{\varepsilon \neq \bseq r \subseteq \bseq s} \varphi_{\bseq r}$ follows by (\ref{A}).

If $\bseq s$ is not $\subseteq$-maximal, then
the induction hypothesis yields
\[U + \lnot \psi \vdash  \varphi_{\bseq sj} \lor\bigvee_{\varepsilon \neq \bseq r \subseteq \bseq s }\varphi_{\bseq r}\] 
for each $j\in {\mathrm I}_{k} \setminus [\bseq s]$.
Hence, 
\begin{equation}\label{B}
U + \lnot \psi \vdash 
\bigwedge_{j\in {\mathrm I}_{k} \setminus [\bseq s]}\varphi_{\bseq sj}\lor
\bigvee_{\varepsilon \neq \bseq r \subseteq \bseq s }\varphi_{\bseq r}.
\end{equation}
Also, $U \vdash \bigwedge_{j\in {\mathrm I}_{k}\setminus[\bseq s]} 
\varphi_{\bseq s j} \to \bigvee_{\varepsilon \neq \bseq r \subseteq \bseq s}\lnot \,({\sim}\psi\preceq \varphi_{\bseq r})$ by condition (ii) of the claim.
By (\ref{A}), we find:
\begin{equation}\label{C}
U + \lnot\psi \vdash \bigwedge_{j\in {\mathrm I}_{k}\setminus[\bseq s]} \varphi_{\bseq s j} \to 
\bigvee_{\varepsilon\neq \bseq r \subseteq \bseq s} \varphi_{\bseq r}.
\end{equation}
Combining (\ref{B}) and (\ref{C}), we obtain $U + \lnot \psi \vdash\bigvee_{\varepsilon\neq \bseq r \subseteq \bseq s} \varphi_{\bseq r}$. 
\end{proof}

Fact~\ref{CexPi} already tells us that the $\Pi_{n}/\Sigma_{n}$-symmetric image of Theorem~\ref{GC2} fails.
Furthermore, Corollary~\ref{fCP} will show that $\bigcap_{i \leq k}\Cons(\Pi_{n}, T_{i})\setminus \Th(U)$ can never be empty.

We close this subsection with open problems concerning implications between conditions for infinite r.e.~families of theories. 
We do not know whether we can extend Theorem \ref{GC1} to  infinite families or not. 
We consider the following three conditions on infinite r.e.~families related to Guaspari's problem:

\begin{description}
	\item[G1] $\bigcap_{i \in \omega}\Cons(\Gamma, T_{i}) \setminus \Th(T_{i}) \neq \emptyset$. 
	\item[G2] For all $i \in \omega$, $\bigl(\bigcap_{\substack{j \neq i \\ j \in \omega}}\Cons(\Gamma, T_{j})\bigr) \setminus \Th(T_{i}) \neq \emptyset$. 
	\item[G3] For all $k \in \omega$, $\bigcap_{i \leq k}\Cons(\Gamma, T_{i}) \setminus \Th(T_{i})\neq \emptyset$. 
\end{description}

Recall that the versions of these three conditions for finite families are all equivalent by Theorem~\ref{GC1}.

With the help of Theorem \ref{GC1}, we obtain the following implications. 

\begin{cor}
For any infinite r.e.~family $\{T_{i}\}_{i \in \omega}$ of theories, \textbf{\upshape G1}~implies\/~\textbf{\upshape G2}, and\/ \textbf{\upshape G2}~implies\/~\textbf{\upshape G3}. 
\end{cor}

\begin{proof}
\textbf{G1} $\Rightarrow$ \textbf{G2}: This is trivial. 

\textbf{G2} $\Rightarrow$ \textbf{G3}: Suppose for all $i \in \omega$, $\bigl(\bigcap_{\substack{j \neq i \\ j \in \omega}}\Cons(\Gamma, T_{j}) \bigr) \setminus \Th(T_{i}) \neq \emptyset$. 

Let $k=0$. Then $\bigcap_{i \leq k}\Cons(\Gamma, T_{i}) \setminus \Th(T_{i}) \neq \emptyset$ by Fact \ref{exCons}. 

Let $k \geq 1$. Then $\bigl(\bigcap_{\substack{j \neq i \\ j \leq k}}\Cons(\Gamma, T_{j}) \bigr) \setminus \Th(T_{i}) \neq \emptyset$ for all $i \leq k$.  
Therefore, $\bigcap_{i \leq k}\Cons(\Gamma, T_{i}) \setminus \Th(T_{i})\neq \emptyset$ by Theorem \ref{GC1}. 
\end{proof}

In Section \ref{SecCex}, we will prove in Theorem \ref{CCC} that the implication \textbf{G3} $\Rightarrow$ \textbf{G2} does not hold in general. 

\begin{prob}\label{prob1}
Does condition \textbf{G2} imply condition \textbf{G1}?
\end{prob}


In connection with Bennet's analysis, we have dealt with the following four conditions:

\begin{description}
	\item [B1] There exists an r.e.~set $X \subseteq \J$ such that 
\[
\bigcap_{i \in \J\setminus X} \Th_{\Gamma^d}(T_i) \nsubseteq \Th\biggl(U + \bigcup_{i \in X} \ThG(T_i)\biggr).
\] 
	\item [B2] There exists a set $X \subseteq \J$ such that 
\[
\bigcap_{i \in \J\setminus X} \Th_{\Gamma^d}(T_i) \nsubseteq \Th\biggl(U + \bigcup_{i \in X} \ThG(T_i)\biggr).
\] 
	\item [B3] $\bigcap_{i \in \J}\Cons(\Gamma, T_{i})\setminus \Th(U) \neq \emptyset$. 
	\item [B4] $\bigcap_{i \leq k}\Cons(\Gamma, T_{i}) \setminus \Th(U) \neq \emptyset$ for all $k \in \J$. 
\end{description}

For finite families of theories, of course, we have \textbf{B1} $\Leftrightarrow$ \textbf{B2} $\Rightarrow$ \textbf{B3} $\Leftrightarrow$ \textbf{B4} by Theorem \ref{OSC}. 
Moreover, in the case of $\Gamma = \Sigma_{n}$, \textbf{B3} $\Rightarrow$ \textbf{B2} by Theorem \ref{GC2}. 

For arbitrary r.e.~families we have the following implications:

\begin{center}
\begin{tikzpicture}
\node (one) at (0,0) {\large \textbf{B1}};
\node (two) at (3,1) {\large \textbf{B2}};
\node (three) at (3,-1){\large \textbf{B3}};
\node (four) at (6,0){\large \textbf{B4}};
\node[below] at (0.5, -0.5) {Theorem \ref{OSC}};
\node[above] at (0.75, 0.5) {trivial};
\node[below] at (5.25, -0.5) {trivial};
\node[above] at (5.5, 0.5) {Corollary \ref{4conditions}};
\draw [->, double distance=1pt] (one)--(two);
\draw [->, double distance=1pt] (one)--(three);
\draw [->, double distance=1pt] (two)--(four);
\draw [->, double distance=1pt] (three)--(four);
\end{tikzpicture}
\end{center}


In Section \ref{SecCex}, we will show that neither \textbf{B2}~$\Rightarrow$~\textbf{B1} nor \textbf{B4}~$\Rightarrow$~\textbf{B3} holds~--- see Corollaries \ref{MSCex} and \ref{counterexample1}, respectively.
For $\Gamma = \Pi_{n}$, Fact \ref{CexPi} gives a counterexample to the implication \textbf{B3} $\Rightarrow$ \textbf{B2}. 
Therefore, for $\Gamma = \Pi_{n}$, neither \textbf{B3} $\Rightarrow$ \textbf{B1} nor \textbf{B4} $\Rightarrow$ \textbf{B2} holds. 
We do not know whether the other implications hold or not. 

\begin{prob}
Does the implication \textbf{B2} $\Rightarrow$ \textbf{B3} hold? 
Also, for $\Gamma = \Sigma_{n}$, do the implications \textbf{B3}~$\Rightarrow$~\textbf{B1}, \textbf{B3}~$\Rightarrow$~\textbf{B2} and \textbf{B4}~$\Rightarrow$~\textbf{B2} hold?
\end{prob}

Notice that the potential implication \textbf{B2}~$\Rightarrow$~\textbf{B3} would strengthen Theorem~\ref{OSC} and Corollary~\ref{4conditions}. 
For $\Gamma = \Sigma_{n}$, the implications \textbf{B3}~$\Rightarrow$~\textbf{B1} and \textbf{B3}~$\Rightarrow$~\textbf{B2} would extend Theorem \ref{GC2} to infinite r.e.~families of theories.

\subsection{Hereditarily $\Gamma$-conservative sentences}\label{HGCs}

In this subsection, we generalize Facts \ref{HCfB1} and \ref{HCfB2}. 
In our proofs, we use the following lemma.

\begin{lem}\label{lHC}
Let $T$ and $U$ be any theories. If $\ThG(U) \subseteq \ThG(T)$, then $\HCons(\Gamma, T) \subseteq \HCons(\Gamma, U)$. 
\end{lem}

\begin{proof}
Let $\psi \in \HCons(\Gamma, T)$. 
To prove $\psi \in \HCons(\Gamma, U)$, let $S$ be any theory and let $\varphi$ be any $\Gamma$ sentence such that $U \vdash S \vdash \PA$ and $S + \psi \vdash \varphi$. 
Since $\psi \to \varphi$ is $\Gamma$ sentence, we have $\PA + \ThG(S) + \psi \vdash \varphi$. 
Since $T \vdash \PA + \ThG(T) \vdash \PA + \ThG(S) \vdash \PA$, we obtain $\PA + \ThG(S) \vdash \varphi$. 
Therefore, $S \vdash \varphi$. 
\end{proof}

First, we generalize Fact \ref{HCfB2} to the case of r.e.~families of theories by using Fact \ref{HCfB2} itself. 

\begin{thm}\label{GHC2}
For any r.e.~family $\{T_{i}\}_{i \in \J}$ of theories and for any theory\/~$U$, the following are equivalent:
\begin{enumerate}
	\item $\bigcap_{i \in \J}\HCons(\Gamma, T_{i})\setminus \Th(U) \neq \emptyset$;
	\item For all $k \in \J$, $\bigcap_{i \leq k}\HCons(\Gamma, T_{i})\setminus \Th(U) \neq \emptyset$;
	\item $U + \bigcup_{i \in \J}\ThG(T_{i})$ is consistent.
\end{enumerate}
\end{thm}

\begin{proof}
1 $\Rightarrow$ 2: This is trivial.

2 $\Rightarrow$ 3: Suppose $U+\bigcup_{i\in\J}\ThG(T_{i})$ is inconsistent. 
Then there is a $k\in\J$ such that $U+\bigcup_{i\leq k}\ThG(T_{i})$ is inconsistent. 
Thus, there are $\Gamma$ sentences $\varphi_{0},\ldots,\varphi_{k}$ such that $T_{i}\vdash\varphi_{i}$ for each $i\leq k$ and $U\vdash\bigvee_{i\leq k}\lnot\varphi_{i}$. 
For any $\Gamma^{d}$~sentence $\psi$ and $i\leq k$,
let $T_{i}^{\psi} : = \PA + \varphi_{i} \lor \lnot \psi$.
By the choice of $\varphi_{i}$, $T_{i}\vdash T_{i}^{\psi}\vdash\PA$ for each 
$i\leq k$.
Suppose $\psi\in\bigcap_{i \leq k}\HCons(\Gamma,T_{i})$. 
Let us show $U\vdash\psi$:
%
%
%

Consider any $i\leq k$.
As $T_{i}^{\psi}+\psi\vdash\varphi_{i}$, 
we must have $T_{i}^{\psi}\vdash\varphi_{i}$, for $\psi\in\HCons(\Gamma,T_{i})\subseteq\Cons(\Gamma,T_{i}^{\psi})$.
Therefore $\PA+\lnot\psi\vdash\varphi_{i}$.
Thus $\PA+\lnot \psi\vdash\bigwedge_{i \leq k}\varphi_{i}$,
that is, $\PA\vdash\bigvee_{i\leq k}\lnot\varphi_{i}\to\psi$. 
Since $U \vdash \bigvee_{i \leq k} \lnot \varphi_{i}$, one has $U \vdash \psi$. 

We~have shown 
$\bigcap_{i\leq k}\HCons(\Gamma, T_{i})\subseteq\Th(U)$. 
In~other words, $\bigcap_{i \leq k}\HCons(\Gamma, T_{i})\setminus \Th(U) = \emptyset$.

3 $\Rightarrow$ 1: Suppose $U + \bigcup_{i \in \J}\ThG(T_{i})$ is consistent. 
Let $T^{+}$ be the theory $\PA + \bigcup_{i \in \J}\ThG(T_{i})$. 
Then $T^{+}$ is a consistent r.e.~extension of $\PA$. 
Since $U + T^{+} \vdash U + \ThG(T^{+})$, $U + \ThG(T^{+})$ is also consistent. 
Therefore, $\HCons(\Gamma, T^{+}) \setminus \Th(U) \neq \emptyset$ by Fact \ref{HCfB2}.
Since $\ThG(T_{i})\subseteq\ThG(T^{+})$ for each $i\in\J$, we have $\HCons(\Gamma, T^{+})\subseteq\bigcap_{i \in \J}\HCons(\Gamma,T_{i})$ by Lemma~\ref{lHC}. 
We conclude 
\[
\bigcap_{i \in \J}\HCons(\Gamma, T_{i})\setminus \Th(U) \neq \emptyset.\qedhere
\]
\end{proof}

\begin{rem}
In the case of $\Cons(\Gamma,T)$, the equivalence of the conditions corresponding to 1 and 2 in Theorem~\ref{GHC2} does not generally hold (Corollary \ref{counterexample1}). 
\end{rem}

Secondly, we generalize Fact \ref{HCfB1}. 
For this purpose, we write down several equivalents of $\bigcap_{i \in \J}\HCons(\Gamma, T_{i}) \setminus \Th(T_{i})\neq \emptyset$. 

\begin{prop}\label{infHCons1}
For any r.e.~family $\{T_{i}\}_{i \in \J}$ of theories, the following are equivalent:
\begin{enumerate}
	\item $\bigcap_{i \in \J}\HCons(\Gamma, T_{i}) \setminus \Th(T_{i}) \neq \emptyset$; 
	\item For all $k \in \J$, $\bigcap_{i \leq k}\HCons(\Gamma, T_{i}) \setminus \Th(T_{i})\neq \emptyset$;
	\item For all $k \in \J$ and $i \leq k$, $\bigl( \bigcap_{\substack{j \neq i \\ j \leq k}}\HCons(\Gamma, T_{j}) \bigr)\setminus \Th(T_{i}) \neq \emptyset$; 
	\item For all $i \in \J$, $T_{i} + \bigcup_{\substack{j \neq i \\ j \in \J}}\ThG(T_{j})$ is consistent. 
\end{enumerate}
\end{prop}

\begin{proof}
1 $\Rightarrow$ 2 and 2 $\Rightarrow$ 3 are trivial. 

3 $\Rightarrow$ 4: 
Let $i\leq k\in\J$. 
By Theorem \ref{GHC2}, $T_{i} + \bigcup_{\substack{j \neq i \\ j \leq k}}\ThG(T_{j})$ is consistent. 
Since $k \geq i$ is arbitrary, $T_{i} + \bigcup_{\substack{j \neq i \\ j \in \J}}\ThG(T_{j})$ is consistent. 

4 $\Rightarrow$ 1: 
Let $T^{+} := \PA + \bigcup_{i \in \J}\ThG(T_{i})$. 
Then, for each $i \in \J$, $T^{+} + T_{i}$ is consistent by clause 4. 
Hence, the r.e.~set $\bigcup_{i \in \J}\Th(T_{i})$ is pointwise consistent with $T^{+}$. 
By Fact \ref{Lin84}, we have
\[
	\HCons(\Gamma, T^{+}) \setminus \bigcup_{i \in \J}\Th(T_{i}) \neq \emptyset.
\]
Since $\ThG(T_{i}) \subseteq \ThG(T^{+})$ for all $i \in \J$, we have $\HCons(\Gamma, T^{+}) \subseteq \bigcap_{i \in \J} \HCons(\Gamma, T_{i})$ by Lemma~\ref{lHC}. 
Therefore, 
\[
	\bigcap_{i \in \J} \HCons(\Gamma, T_{i}) \setminus \bigcup_{i \in \J}\Th(T_{i}) \neq \emptyset.\qedhere
\]
\end{proof}


The following corrollary which is a generalization of Fact \ref{HCfB1} immediately follows from 3~$\Rightarrow$~1 of Proposition \ref{infHCons1}. 

\begin{cor}\label{GHC1}
For any r.e.~family $\{T_{i}\}_{i \in \J}$ of theories, the following are equivalent:
\begin{enumerate}
	\item $\bigcap_{i \in \J}\HCons(\Gamma, T_{i}) \setminus \Th(T_{i})\neq \emptyset$; 
	\item For all $i \in \J$, $\bigl( \bigcap_{\substack{j \neq i \\ j \in \J}}\HCons(\Gamma, T_{j}) \bigr)\setminus \Th(T_{i}) \neq \emptyset$.\qed
\end{enumerate}
\end{cor}

\begin{rem}
In the case of $\Cons(\Gamma, T)$, the equivalence of the conditions corresponding to 1 and 2 in Proposition \ref{infHCons1} does not generally hold (Corollary \ref{CfC}). 
We do not know whether the equivalence corresponding to 1~$\Leftrightarrow$~2 in Corollary~\ref{GHC1} holds (Problem~\ref{prob1}). 
\end{rem}


\section{$\Pi_{n}$-conservative $\Sigma_{n}$ sentences for finitely many theories}\label{Picons}

In this section, we prove that for any two theories $T$ and $U$, there exists a $U$-unprovable $\Sigma_n$ sentence which is $\Pi_n$-conservative over $T$. 
This solves Bennet's Problem \ref{BenProb}. 
As a consequence, in the case of $\Gamma = \Pi_n$, we give an affirmative answer to Guaspari's Problem \ref{Mainprob} restricted to finite families of theories~--- this also settles Problem~\ref{MisProb}. 

\begin{thm}\label{Thm_Pi}
For any theories\/ $T$ and~\/$U$, one has\/ $\Cons(\Pi_n, T) \setminus \Th(U) \neq \emptyset$. 
\end{thm}
\begin{proof}
Let $\Prv_U(x)$ be a standard $\Sigma_1$ provability predicate for $U$ satisfying $U \nvdash\Prv_{U}^{\Sigma_{n}}(\gn{0=1})$ as in Fact~\ref{nCon}. 
If the $\Sigma_n$~sentence $\Prv_{U}^{\Sigma_{n}}(\gn{0=1})$ is $\Pi_n$-conservative over~$T$, then $\Prv_{U}^{\Sigma_{n}}(\gn{0=1})\in\Cons(\Pi_{n},T)\setminus\Th(U)$, 
so we are done. 

If $\Prv_{U}^{\Sigma_{n}}(\gn{0=1})$ is not $\Pi_n$-conservative over $T$, then there exists a $\sn$~sentence $\alpha$ such that $T +\alpha$ is consistent and 
$T+\alpha\vdash\neg \Prv_{U}^{\Sigma_{n}}(\gn{0=1})$.
There exists a $\Sigma_{n}$~sentence $\sigma$ such that 
\[
\sigma \in \HCons(\Pi_{n}, T+\alpha) \setminus \Th(T +\alpha)
\] 
by Fact \ref{exHCons}. 
Since $T+\alpha\nvdash\sigma$, there exists a model $M$ of $T+\alpha$ such that $M\models \neg \sigma$. 
Let $U^{+} := U + \ThS(M)$. 
Even though $U^{+}$ likely fails to be a theory in the sense of the present paper
in~view of possible lack of recursive enumerability, it~is still going to be useful.

First, we show that $U^{+}$ is consistent. 
Suppose, towards contradiction, that $U^{+}$ is inconsistent. 
Then, there exists a $\Sigma_{n}$ sentence $\psi$ such that $U \vdash \lnot \psi$ and $M \models \psi$. 
Then $\PA \vdash \Prv_U(\gn{\lnot \psi})$. 
Since $\psi$ is a $\Sigma_{n}$ sentence, $\PA\vdash\psi\to\Pr_U^{\sn}(\gn{\psi})$, and hence $\PA \vdash \psi \to \Prv_{U}^{\sn}(\gn{0=1})$. 
Therefore $T +\alpha\vdash \lnot \psi$. 
Since $M$ is a model of $T +\alpha$,
we find that $\lnot \psi$ is true in $M$, a contradiction. 
Therefore $U^+$ is consistent. 

As~usual, we assume that both $\alpha$ and~$\sigma$ are written in the form of a $\pnmo$~formula following a single existential quantifier. 
Let $\varphi$ and $\varphi^*$ be $\Sigma_n$ sentences satisfying
\[
\PA\vdash\varphi\leftrightarrow(\sigma\preceq\alpha)\vee\bigl(\sigma\preceq\Pr_{U}^{\pnmo}(\gn{\varphi})\bigr), \ \text{and}
\]
\[
	\varphi^*\equiv(\alpha\prec\sigma)\wedge\bigl(\Pr_{U}^{\pnmo}(\gn{\varphi})\prec\sigma\bigr). 
\]
Then we have $\PA \vdash \neg (\varphi \land \varphi^*)$ and $\PA \vdash \sigma \to (\varphi \lor \varphi^*)$ by Fact~\ref{dual}.  

We prove $\varphi \in\Cons(\Pi_{n}, T) \setminus \Th(U)$. 

Suppose one had $U \vdash \varphi$. 
Then $\PA\vdash\Pr_{U}^{\pnmo}(\gn{\varphi})$. 
Since $M \models \alpha\land \neg\sigma$, we obtain $M \models \varphi^{*}$
by Fact~\ref{dual}.4. 
Then $\varphi^{*} \in \ThS(M)$. 
Thus $U^{+} \vdash \varphi \land \varphi^{*}$. 
This contradicts the consistency of $U^+$. 
Hence $U \nvdash \varphi$. 

By the definition of $\varphi^*$, $\PA \vdash \varphi^* \to\alpha\land\Pr_{U}^{\pnmo}(\gn{\varphi})$ in view of Fact~\ref{dual}.1. 
Since $\varphi^*$ is $\Sigma_n$, we have $\PA \vdash \varphi^* \to \Pr_{U}^{\pnmo}(\gn{\varphi^*})$ by Fact~\ref{rPr}. 
Hence $\PA \vdash \varphi^* \to\alpha\land \Pr_{U}^{\pnmo}(\gn{0=1})$. 
Since $T +\alpha\vdash \neg \Pr_{U}^{\pnmo}(\gn{0=1})$, we have $T \vdash \neg \varphi^*$. 
Therefore $T + \sigma \vdash \varphi$. 
Finally, from $\sigma\in\HCons(\Pi_{n},T+\alpha)$ we conclude $\sigma\in\Cons(\Pi_{n},T)$, and hence $\varphi\in\Cons(\Pi_{n},T)$ by Lemma \ref{cCons}.2.
\end{proof}

\begin{cor}\label{fCP}
Let $k$ be any natural number. 
Then for any theories $T_{0}, \ldots, T_{k}$ and\/ $U$, we have $\bigcap_{i \leq k}\Cons(\Pi_{n}, T_{i})\setminus \Th(U) \neq \emptyset$. 
\end{cor}

\begin{proof}
We argue by induction on $k$. 
For $k = 0$, this is Theorem \ref{Thm_Pi}. 
Suppose that the statement holds for $k$, and let $T_{0}, \ldots, T_{k}, T_{k+1}, U$ be any theories. 
Then by induction hypothesis, there exists a sentence $\varphi$ contained in $\bigcap_{i \leq k}\Cons(\Pi_{n}, T_{i})\setminus \Th(U)$. 
Since $U + \neg \varphi$ is consistent, there exists a sentence $\psi$ contained in $\Cons(\Pi_n, T_{k+1}) \setminus \Th(U + \neg \varphi)$ by Theorem \ref{Thm_Pi}. 
Then $\varphi \lor \psi$ is in the set $\bigcap_{i \leq {k+1}}\Cons(\Pi_{n}, T_{i})\setminus \Th(U)$ by Lemma \ref{cCons}.2. 
\end{proof}

By combining this corollary with Theorem \ref{GC1}, we solve Guaspari's Problem~\ref{Mainprob} for finite families of theories and $\Gamma = \Pi_{n}$. 

\begin{cor}\label{GuaPP}
Let $\{T_i\}_{i \leq k}$ be any finite family of theories. 
Then we have $\bigcap_{i \leq k}\Cons(\Pi_n, T_{i}) \setminus \Th(T_{i}) \neq \emptyset$. 
\qed
\end{cor}

Thus, in contrast to the case of $\Gamma = \Sigma_n$ (see Theorem \ref{GC2}), every finite family of theories admits a $\Sigma_n$ sentence which is simultaneously nontrivially $\Pi_n$-conservative over all theories in the family. 

Notice that our proof of Theorem \ref{Thm_Pi} does not provide an effective procedure for finding an element of $\Cons(\Pi_{n}, T) \setminus \Th(U)$ from (indices for) $T$ and $U$.

\begin{prob}\label{ec}
Given $T$ and $U$, can we effectively find a $\Sigma_{n}$ sentence $\varphi$ such that
\[
\varphi \in \Cons(\Pi_n, T) \setminus \Th(U)\;?
\]
\end{prob}

If Problem~\ref{ec} has an affirmative answer, then for any finite family of theories, we can effectively find a $\Sigma_{n}$ sentence which is simultaneously nontrivially $\Pi_n$-conservative over all theories in the family by the proofs of Corollary \ref{fCP} and Theorem \ref{GC1}. 

Finally, we propose the problem asking whether Theorem \ref{Thm_Pi} can be strengthened in the spirit of Fact~\ref{Lin84}. 

\begin{prob}
For a theory $T$ and an r.e.~set $X$ of sentences that is pointwise consistent with $\PA$, must one have $\Cons(\Pi_n, T) \setminus X \neq \emptyset$~?
\end{prob}

Needless to say, one cannot hope for a positive answer with $\Cons(\Sigma_n, T)$ instead of $\Cons(\Pi_n, T)$ (see comments just below Fact~\ref{B3'}).

\section{Counterexamples}\label{SecCex}

In connection with Guaspari's and Bennet's problems, we have studied a number of conditions on finite and infinite r.e.~families of theories.
In this section, we show the failure of implication between several of those conditions.

As we have already mentioned, Corollary~\ref{GHC1} reduces the investigation of the condition $\bigcap_{i \leq k+1}\HCons(\Gamma, T_{i}) \setminus \Th(T_{i})\neq \emptyset$ to that of conditions of the form
$\bigcap_{i \leq k}\HCons(\Gamma, T_{i})\setminus \Th(U) \neq \emptyset$.
One may ask whether it can be further reduced to  some simple conditions such as $\HCons(\Gamma, T) \setminus \Th(U) \neq \emptyset$. 
This does not appear to be the case:  

\begin{thm}\label{CexHCons1}
For any $k \geq 1$, there are theories $T_{0},\ldots,T_{k+1}$ satisfying the following conditions:
\begin{enumerate}
	\item $\bigcap_{i \leq k+1}\HCons(\Gamma, T_{i})\setminus\Th(T_{i})=\emptyset$;
	\item For all distinct $i_{0},i_{1}\leq k+1$, $\bigl(\bigcap_{\substack{j \neq i_{0}, i_{1} \\ j \leq k+1}}\HCons(\Gamma, T_{j})\bigr) \setminus \Th(T_{i_{1}}) \neq \emptyset$.
\end{enumerate}
\end{thm}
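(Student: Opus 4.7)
The plan is to apply (the finite-sequence version of) Theorem \ref{GHC2} in both directions: its failure will kill condition~1, while its positive half will deliver the witnesses required by condition~2. The whole construction will rest on a system of mutually independent $\Gamma$ sentences.

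First, fix $\Gamma$ sentences $\sigma_{1}, \ldots, \sigma_{k+1}$ such that, for every $\epsilon \in \{0,1\}^{k+1}$, the theory $\PA + \bigwedge_{j=1}^{k+1} \sigma_{j}^{\epsilon_{j}}$ is consistent (writing $\sigma^{1} := \sigma$ and $\sigma^{0} := \lnot \sigma$); such a system exists by standard arguments on independent sentences in arithmetic, cf.~Lindstr\"om \cite{Lin}. Then define
\[
T_{0} := \PA + \bigvee_{j=1}^{k+1} \lnot \sigma_{j}, \qquad T_{j} := \PA + \sigma_{j} \ \ (1 \leq j \leq k+1),
\]
each of which is a consistent r.e.~extension of $\PA$ by mutual independence. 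To verify that condition~1 fails, observe that $\sigma_{j} \in \ThG(T_{j})$ for $j \geq 1$, so $\bigcup_{j=1}^{k+1}\ThG(T_{j}) + T_{0}$ proves both $\bigwedge_{j=1}^{k+1}\sigma_{j}$ and $\bigvee_{j=1}^{k+1}\lnot\sigma_{j}$, hence is inconsistent; Theorem \ref{GHC2} (applied as in Remark \ref{fGHC1}, with $U = T_{0}$) then yields $\bigl(\bigcap_{j=1}^{k+1}\HCons(\Gamma, T_{j})\bigr) \setminus \Th(T_{0}) = \emptyset$, whence $\bigcap_{i \leq k+1}\bigl(\HCons(\Gamma, T_{i}) \setminus \Th(T_{i})\bigr) = \emptyset$ at once.

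For condition~2, fix distinct $i_{0}, i_{1} \leq k+1$. Since $\ThG(T_{j}) \subseteq \Th(T_{j})$, it suffices to check that the union of the theories $\{T_{j} : j \neq i_{0}, i_{1}\} \cup \{T_{i_{1}}\}$ is consistent. A short case analysis on whether $i_{0} = 0$ will show that this union is deductively equivalent either to $\PA + \bigwedge_{j=1}^{k+1}\sigma_{j}$ (when $i_{0} = 0$) or to $\PA + \{\sigma_{j} : 1 \leq j \leq k+1,\ j \neq i_{0}\} + \lnot\sigma_{i_{0}}$ (otherwise), both of which are consistent by the independence of the $\sigma_{j}$. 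Theorem \ref{GHC2} will then supply a witness in $\bigl(\bigcap_{j \neq i_{0}, i_{1}}\HCons(\Gamma, T_{j})\bigr) \setminus \Th(T_{i_{1}})$. The only non-routine ingredient in the argument is the existence of the independent system $\sigma_{1}, \ldots, \sigma_{k+1}$; once these sentences are in hand, all subsequent verifications are direct combinations of the observations that $\ThG(T_{j}) \subseteq \Th(T_{j})$ and that Theorem \ref{GHC2} reduces the $\HCons$-statements to pure consistency statements.
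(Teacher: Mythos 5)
Your proposal is correct and follows essentially the same route as the paper: both reduce the two conditions to consistency/inconsistency statements via Theorem \ref{GHC2} (with the finite-sequence reading of Remark \ref{fGHC1}) and then realize them with a family of mutually independent $\Gamma$ sentences, one theory per sentence plus one ``disjunction of negations'' theory. The only differences are cosmetic — you place the disjunction theory at index $0$ rather than $k+1$, and you derive condition 1 by the direct inclusion rather than citing Theorem \ref{GHC1}.
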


\begin{proof}
It suffices to find theories $T_{0},\ldots,T_{k+1}$ satisfying the following two conditions:
\begin{itemize}
	\item[(i)] $T_{k+1} + \bigcup_{i \leq k}\ThG(T_{i})$ is inconsistent;
	\item[(ii)] For any $i \leq k+1$, $\bigcup_{\substack{j \neq i \\ j \leq k+1}}T_{j}$ is consistent.
\end{itemize}
This is because (i) implies that $\bigcap_{i \leq k}\HCons(\Gamma, T_{i})\setminus \Th(T_{k+1}) = \emptyset$ by Theorem \ref{GHC2}. 
Therefore, $\bigcap_{i \leq k+1}\HCons(\Gamma, T_{i}) \setminus \Th(T_{i})= \emptyset$.
Moreover, (ii) implies that for all distinct $i_{0},i_{1}\leq k+1$, $T_{i_{1}} + \bigcup_{\substack{j \neq i_{0},  i_{1} \\ j \leq k+1}}\ThG(T_{j})$ are consistent. 
Therefore, for all distinct $i_{0},i_{1}\leq k+1$, $\bigl(\bigcap_{\substack{j \neq i_{0}, i_{1} \\ j \leq k+1}}\HCons(\Gamma, T_{j})\bigr) \setminus \Th(T_{i_{1}}) \neq \emptyset$ by Theorem \ref{GHC2}. 

Let $\xi_{0},\ldots,\xi_{k}$ be $\Gamma$~sentences such that
$\PA+\bigwedge_{i\in X}\xi_{i}+\bigwedge_{i\in\Ik\setminus X}\neg\xi_{i}$ is consistent for each $X\subseteq\Ik$ (see Lindstr\"om \cite[Theorem 2.9]{Lin}). 
Let $T_{k+1} := \PA + \bigvee_{i \leq k} \lnot \xi_{i}$ and for each $i \leq k$, 
let $T_{i} := \PA + \xi_{i}$. 
Then $T_{k+1} + \bigcup_{i \leq k}\ThG(T_{i})$ is obviously inconsistent.
Moreover, for $i \leq k$, 
$\bigcup_{\substack{j \neq i \\ j \leq k+1}}T_{j}$ is deductively equivalent to 
$\PA + \bigwedge_{\substack{j \neq i \\ j \leq k}}\xi_{j} + \lnot \xi_{i}$. 
Hence, $\bigcup_{\substack{j \neq i \\ j \leq k+1}}T_{j}$ is consistent by the choice of~$\xi_{i}$. 
For $i=k+1$, $\bigcup_{\substack{j \neq i \\ j \leq k+1}}T_{j}$ is deductively equivalent to 
$\PA + \bigwedge_{j \leq k}\xi_{j}$. 
Hence, $\bigcup_{\substack{j \neq i \\ j \leq k+1}}T_{j}$ is also consistent by the choice of~$\xi_{i}$. 
Therefore, for any $i \leq k+1$, $\bigcup_{\substack{j \neq i \\ j \leq k+1}}T_{j}$ is consistent.
\end{proof}

Moreover, from the proof of Theorem \ref{CexHCons1}, we obtain the following corollary. 

\begin{cor}
For any $k \geq 1$, there are theories $T_{0},\ldots,T_{k}$ and\/ $U$ satisfying the following conditions:
\begin{enumerate}
	\item $\bigcap_{i \leq k}\HCons(\Gamma, T_{i})\setminus \Th(U) = \emptyset$; 
	\item For all $i \leq k$, $\bigcap_{\substack{j \neq i \\ j \leq k}}\HCons(\Gamma, T_{j})\setminus\Th(U) \neq \emptyset$.\qed
\end{enumerate}
\end{cor}

For $\Sigma_n$-conservative sentences, we have a result similar to Theorem \ref{CexHCons1}. 
Observe that the $\Gamma=\Sigma_{n}$ half of Theorem~\ref{CexHCons1} is strengthened by the following one.

\begin{thm}\label{CE1}
For any $k \geq 1$, there are theories $T_{0},\ldots,T_{k+1}$ such that
\begin{enumerate}
	\item $\bigcap_{i \leq k+1}\Cons(\Sigma_{n}, T_{i}) \setminus \Th(T_{i})= \emptyset$;
	\item For all distinct $i_{0},i_{1}\leq k+1$, $\bigl(\bigcap_{\substack{j \neq i_{0}, i_{1} \\ j \leq k+1}}\HCons(\Sigma_{n},T_{j})\bigr) \setminus\Th(T_{i_{1}}) \neq \emptyset$.
\end{enumerate}
\end{thm}

\begin{proof}
%
As~a first step, we produce a suite $\tau,\sigma_{0}, \ldots, \sigma_{k}$
of sentences with certain desirable properties.
These sentences will then serve as building blocks for the construction
of theories $T_{i}$ instantiating the theorem.

By Fact \ref{exCons}, fix a $\Sigma_{n}$ sentence $\tau$ such that $\tau \in \Cons(\Pi_{n}, \PA) \setminus \Th(\PA)$. 
Let $\sigma_{0}, \ldots, \sigma_{k}$ be $\Sigma_{n}$ sentences satisfying
\begin{eqnarray*}
\PA \vdash \sigma_{i} &\leftrightarrow& \bigwedge_{j<i} \Prv_{\PA + \lnot \tau}^{\Pi_{n-1}}(\gn{\lnot \sigma_{i}}) \prec \Prv_{\PA + \lnot \tau}^{\Pi_{n-1}}(\gn{\lnot \sigma_{j}})\\
&& \qquad\land\bigwedge_{i < j \leq k} \Prv_{\PA + \lnot \tau}^{\Pi_{n-1}}(\gn{\lnot \sigma_{i}}) \preceq \Prv_{\PA + \lnot \tau}^{\Pi_{n-1}}(\gn{\lnot \sigma_{j}}).
\end{eqnarray*}

We show that the $\Sigma_{n}$ sentences  $\sigma_{0},\ldots,\sigma_{k}$ and $\tau$ satisfy (a)--(c) below: 
\begin{itemize}
	\item[(a)] For each $i \leq k$, $\PA + \lnot \tau + \sigma_{i}$ is consistent;  
	\item[(b)] For each distinct $i, j \leq k$, $\PA \vdash \lnot(\sigma_{i} \land \sigma_{j})$; 
	\item[(c)] $\Cons(\Pi_{n},\PA)\owns\bigvee_{i\leq k}\sigma_{i}$. 
\end{itemize}


(a): 
Suppose there existed an $l\leq k$ such that $\PA+\lnot\tau\vdash\lnot\sigma_{l}$. 
For some $q\in\omega$, the $\Delta_{1}$~sentence 
$\Prf_{\PA+\lnot\tau}(\gn{0=0\to\neg\sigma_{l}},\overline q)$ must then be true.

Reason in $\PA+\neg\tau$:
Since $0=0$ is a true $\pnmo$~sentence,
$\Prf_{\PA+\lnot\tau}^{\pnmo}(\gn{\neg\sigma_{l}},\overline q)$ holds.
Thus we can fix the smallest $p\leq q$ for which there is an $i \leq k$ such that 
\begin{align}\label{n1'}
\Prf_{\PA+\lnot\tau}^{\pnmo}(\gn{\neg\sigma_{i}},\overline p).
\end{align}
Consider the smallest~$i$ satisfying~(\ref{n1'}).
Then, for all $m\leq p$ and $j<i$, as~well as for all $m<p$ and $j>i$ $(j\leq k)$, one has $\neg\Prf_{\PA+\lnot\tau}^{\pnmo}(\gn{\neg\sigma_{j}},\overline m)$.
Therefore, $\sigma_{i}$ holds. 
On~the other hand, since $p$ is standard, 
Fact~\ref{rPr}.1 applied to~(\ref{n1'}) shows $\lnot\sigma_i$. 
This is a contradiction in $\PA+\neg\tau$.

But $\tau$ was chosen to be consistent with $\PA$. 
Hence the theories
$\PA + \lnot \tau + \sigma_{l}$ are consistent for all $l\leq k$. 

(b): 
Let $i < j \leq k$. Then 
\begin{eqnarray*}
\PA \vdash \sigma_{i} \land \sigma_{j} &\to& \Prv_{\PA + \lnot \tau}^{\Pi_{n-1}}(\gn{\lnot \sigma_{i}}) \preceq \Prv_{\PA + \lnot \tau}^{\Pi_{n-1}}(\gn{\lnot \sigma_{j}})\\
&& \qquad{}\land\Prv_{\PA + \lnot \tau}^{\Pi_{n-1}}(\gn{\lnot \sigma_{j}}) \prec \Prv_{\PA + \lnot \tau}^{\Pi_{n-1}}(\gn{\lnot \sigma_{i}}).
\end{eqnarray*}
Therefore, $\PA \vdash \lnot (\sigma_{i} \land \sigma_{j})$ by Fact \ref{dual}.2. 


(c): 
We shall show $\PA + \Prv_{\PA}^{\Pi_{n-1}}(\gn{0=1}) \vdash \bigvee_{i \leq k} \sigma_{i}$. 
Then, we have $\PA + \Prv_{\PA}^{\Sigma_{n}}(\gn{0=1}) \vdash \bigvee_{i \leq k} \sigma_{i}$ and therefore $\bigvee_{i \leq k} \sigma_{i} \in \Cons(\Pi_{n}, \PA)$ by Proposition \ref{lPTS} and Lemma \ref{cCons}.2. 

Reason in $\PA + \Prv_{\PA}^{\Pi_{n-1}}(\gn{0=1})$:
From $\Prv_{\PA}^{\Pi_{n-1}}(\gn{0=1})$, we have $\bigwedge_{j \leq k}\Prv_{\PA + \lnot \tau}^{\Pi_{n-1}}(\gn{\lnot \sigma_{j}})$.
Choose the smallest $i\leq k$ such that 
\[
\Prv_{\PA + \lnot \tau}^{\Pi_{n-1}}(\gn{\lnot \sigma_{i}}) \preceq \Prv_{\PA + \lnot \tau}^{\Pi_{n-1}}(\gn{\lnot \sigma_{j}})
\]
for all $j\leq k$. Then $\sigma_{i}$ must hold. 


Armed with the freshly selected sentences, we~are now prepared to say what our
theories~$T_{i}$ are.
Let
\[
	T_{k+1} := \PA + \lnot \tau \ \ \text{and} \ \ T_{i} := \PA + (\tau \land \sigma_{i}) \lor \bigvee_{\substack{j \neq i \\ j \leq k}}\sigma_{j} \ \text{for} \ i \leq k.
\]

We prove that the theories $T_{0},\ldots, T_{k+1}$ satisfy the following three conditions:  
\begin{itemize}
	\item[(i)] For each $i \leq k$, $\ThP(T_{i}) \subseteq \ThP(\PA)$;  
	\item[(ii)] $T_{k+1} + \bigcup_{i \leq k}\ThS(T_{i})$ is inconsistent; 
	\item[(iii)] For each $i \leq k+1$, the theory $\bigcup_{\substack{j \neq i \\ j \leq k+1}}T_{j}$ is consistent. 
\end{itemize}

(i): By (c) and Lemma \ref{cCons}.1, we have $\tau \land \bigvee_{j \leq k}\sigma_{j} \in \Cons(\Pi_{n}, \PA)$. 
For each $i \leq k$, since $\PA + \tau \land \bigvee_{j \leq k}\sigma_{j} \vdash T_{i}$, we have $\ThP(T_{i}) \subseteq \ThP(\PA)$. 

(ii): By the choice of $T_{i}$, we know
\[
\PA + \bigcup_{i \leq k} \ThS(T_{i}) \vdash \bigwedge_{i \leq k}\Biggl((\tau \land \sigma_{i}) \lor \bigvee_{\substack{j \neq i \\ j \leq k}}\sigma_{j}\Biggr).
\]
Since $T_{k+1} \vdash \lnot \tau$, 
\[
T_{k+1} + \bigcup_{i \leq k} \ThS(T_{i}) \vdash \bigwedge_{i \leq k} \bigvee_{\substack{j \neq i \\ j \leq k}}\sigma_{j}.
\]
Therefore, $T_{k+1} + \bigcup_{i\leq k}\ThS(T_{i})$ is inconsistent by (b). 

(iii): Suppose $i = k+1$. 
For each $j \leq k$, $\PA + \tau \land \bigvee_{l \leq k} \sigma_{l} \vdash T_{j}$ as argued in the proof of~(i). 
Furthermore, since $\tau\land\bigvee_{l \leq k}\sigma_{l} \in \Cons(\Pi_{n}, \PA)$, $\PA + \tau \land \bigvee_{l \leq k}\sigma_{l}$ is consistent. Therefore, $\bigcup_{j \leq k}T_{j}$ is consistent. 

Suppose $i \leq k$. Then, $\PA + \lnot \tau + \sigma_{i} \vdash \bigcup_{\substack{j \neq i \\ j \leq k+1}} T_{j}$. Therefore, $\bigcup_{\substack{j \neq i \\ j \leq k+1}} T_{j}$ is consistent by (a). 

At last, we show that the theories $T_{0},\ldots,T_{k+1}$ satisfy the conditions of the theorem:

1: We are going to show $\bigcap_{i\leq k}\Cons(\Sigma_{n},T_{i})\setminus\Th(T_{k+1})=\emptyset$.
According to Theorem~\ref{GC2}, it suffices to verify that for each $X \subseteq \mathrm{I}_{k}$, one has
\[
	\bigcap_{i \in \mathrm{I}_{k} \setminus X}\ThP(T_{i}) \subseteq \Th\biggl(T_{k+1} + \bigcup_{i \in X}\ThS(T_{i})\biggr).
\]
If $X \neq \mathrm{I}_{k}$, then $\bigcap_{i \in \mathrm{I}_{k} \setminus X}\ThP(T_{i}) = \ThP(\PA)$ by (i), so the inclusion holds. When $X = \mathrm{I}_{k}$, $T_{k+1} + \bigcup_{i \in X}\ThS(T_{i})$ is an inconsistent theory by (ii), so the inclusion must hold as well. 

2: Let $i_{0}, i_{1} \leq k+1$ be distinct natural numbers. 
Then $\bigcup_{\substack{j \neq i_{0} \\ j \leq k+1}}T_{j}$ is consistent by (iii) and therefore, $T_{i_{1}} + \bigcup_{\substack{j \neq i_{0}, i_{1} \\ j \leq k+1}}\ThS(T_{j})$ is consistent. 
By Theorem \ref{GHC2}, we have 
\[
\Biggl(\bigcap_{\substack{j \neq i_{0}, i_{1} \\ j \leq k+1}}\HCons(\Sigma_{n}, T_{j})\Biggr) \setminus \Th(T_{i_{1}}) \neq \emptyset.\qedhere
\]
\end{proof}

We obtain the following corollary by Theorems \ref{CE1} and~\ref{GC1}. 

\begin{cor}\label{CE1cor}
For any $k \geq 1$, there are theories $T_{0},\ldots,T_{k}$ and\/ $U$ satisfying the following conditions:
\begin{enumerate}
	\item $\bigcap_{i \leq k}\Cons(\Sigma_{n}, T_{i})\setminus \Th(U) = \emptyset$; 
	\item For all $i \leq k$, $\bigcap_{\substack{j \neq i \\ j \leq k}}\Cons(\Sigma_{n}, T_{j})\setminus\Th(U) \neq \emptyset$.\qed
\end{enumerate}
\end{cor}

Recall that Corollary~\ref{fCP} ruled out the existence of theories
satisfying the $\Cons(\Pi_{n},\cdot)$-analogue of condition~1 in Theorem~\ref{CE1}.

We take another look at the following conditions on infinite families introduced in Subsection~\ref{GCs}. 

\begin{description}
	\item[G1] $\bigcap_{i \in \omega}\Cons(\Gamma, T_{i}) \setminus \Th(T_{i})\neq \emptyset$. 
	\item[G2] For all $i \in \omega$, $\bigl(\bigcap_{\substack{j \neq i \\ j \in \omega}}\Cons(\Gamma, T_{j})\bigr) \setminus \Th(T_{i}) \neq \emptyset$. 
	\item[G3] For all $k \in \omega$, $\bigcap_{i \leq k}\Cons(\Gamma, T_{i}) \setminus \Th(T_{i})\neq \emptyset$. 
\end{description}

We are going to present a counterexample to the implication \textbf{G3}~$\Rightarrow$~\textbf{G2}. 
The following lemma will prepare us for the construction. 

\begin{lem}\label{L1}
Let $T$ be any theory which is not $\Sigma_1$-sound. 
Then there exists a $\Gamma$ sentence $\psi$ satisfying the following conditions: 
\begin{enumerate}
	\item $\psi$ is not provably equivalent to any $\Gamma^d$ sentence in $T$; 
	\item $\psi$ is not $\Gamma^d$-conservative over $T$. 
\end{enumerate}
\end{lem}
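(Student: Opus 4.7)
The plan is to exploit the non-$\Sigma_1$-soundness to fix a false $\Sigma_1$ sentence $\sigma \equiv \exists x\, \delta(x)$ with $\delta \in \Delta_0$, $\mathbb{N}\nvDash \sigma$, and $T\vdash \sigma$. Consequently $T\vdash \lnot\delta(\overline{n})$ for every standard $n$, while every model of $T$ admits a non-standard witness to $\delta$. This gap between $T$ and $\mathbb{N}$ is the lever that the whole construction rests on.

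I will treat the case $\Gamma=\Sigma_n$ in detail; the case $\Gamma=\Pi_n$ is handled dually. First I would invoke Fact \ref{exCons} (or Gödel--Rosser) to pick a $T$-undecidable $\Pi_n$ sentence $\theta\equiv\forall y\,\gamma(y)$ with $\gamma\in\Sigma_{n-1}$ (or $\Delta_0$ when $n=1$). Then, using the diagonal lemma, I would produce a $\Sigma_n$ sentence $\psi$ realising a Rosser-style race between a $\delta$-witness and any hypothetical $T$-derivation of $\lnot\psi$. A candidate fixed-point equation is
\[
\PA\vdash\psi\leftrightarrow\exists x\bigl(\delta(x)\land\forall y\le x\,\gamma(y)\land\forall z\le x\,\lnot\Prf_T(\gn{\lnot\psi},z)\bigr).
\]
The three conjuncts balance the $\delta$-witness, the $\theta$-content, and the Rosser clause that blocks collapse to a $\Pi_n$ sentence.

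For condition (2), I would argue that $T+\psi\vdash\theta$: in any $M\models T+\psi$ the $\delta$-witness $c$ forces $\forall y\le c\,\gamma(y)$; combining this with $T\vdash\sigma$ (which provides arbitrarily large $\delta$-witnesses in $M$ by overspill, past any prescribed element) would push the bound $c$ through all of $M$, giving $\forall y\,\gamma(y)$. Since $\theta$ is $T$-unprovable, $\psi$ is not $\Pi_n$-conservative over $T$. For condition (1), I would assume $T\vdash\psi\leftrightarrow\psi'$ for some $\psi'\in\Pi_n$ and aim for a contradiction via Fact \ref{Gam}.(iii): that fact supplies a concrete $q$ with $\PA+\lnot\psi'\vdash\lnot[\Sigma_n]_T(\gn{\psi},\overline{q})$, and pairing this with the clauses $T\vdash\lnot\delta(\overline{m})$ for $m\le q$, $T\vdash\sigma$, and the fixed-point equation would force a non-standard $\delta$-witness past $q$ that contradicts the bound $q$ from Fact \ref{Gam}.(iii).

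The main obstacle is calibrating the fixed-point clause so that both verifications go through: the Rosser sub-clause involving $\Prf_T$ must be strong enough to prevent $\psi$ from being $T$-equivalent to any $\Pi_n$ sentence, yet weak enough that the non-standard $\delta$-witness still forces $\theta$. A cleaner bookkeeping might instead use Fact \ref{dual} with $\alpha(x)\equiv\delta(x)\land\forall y\le x\,\gamma(y)$ against $\beta(y)\equiv\Prf_T(\gn{\lnot\psi},y)$, so that the pair $(\psi,\psi^{*})$ produced by that fact can be played off against each other using its clauses (i) and (ii). For $\Gamma=\Pi_n$ I would mirror the construction using a $T$-undecidable $\Sigma_n$ sentence and the dual Rosser race, with $[\Pi_n]_T$ replacing $[\Sigma_n]_T$ in the inequivalence argument.
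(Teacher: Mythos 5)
There is a genuine gap, in fact two. Your key step for condition (2) is the claim that $T+\psi\vdash\theta$, justified by saying that $T\vdash\sigma$ ``provides arbitrarily large $\delta$-witnesses in $M$ by overspill.'' This is not so: $T\vdash\exists x\,\delta(x)$ guarantees only one witness in each model, and overspill gives nothing here because $\delta$ need not hold on an initial segment or cofinally anywhere (take $\delta(x)$ to say ``$x$ is the least code of a $T$-proof of $0=1$''; it has at most one witness). Even if you replace $\delta(x)$ by the upward-closed $\exists x'\le x\,\delta(x')$, your fixed point asserts the existence of a \emph{single} $x$ satisfying the whole conjunction, so all you get in a model of $T+\psi$ is $\gamma(y)$ for $y$ below that one witness, never $\forall y\,\gamma(y)$. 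The second gap is condition (1): a Rosser clause mentioning only $\Prf_T(\gn{\lnot\psi},z)$ can secure undecidability of $\psi$, but it gives you no leverage against an arbitrary $\Pi_n$ sentence $\psi'$ with $T\vdash\psi\leftrightarrow\psi'$. The bound $q$ you extract from Fact \ref{Gam}.(iii) concerns $[\Sigma_n]_T$, which does not occur in your fixed-point equation, so there is nothing for it to contradict; ruling out equivalence to \emph{every} $\Gamma^d$ sentence requires a diagonalization over all of $\Gamma^d$ (or a universal-formula argument), not a single self-referential clause. You half-acknowledge this when you say the main obstacle is ``calibrating the fixed-point clause,'' but that calibration is exactly what is missing, and I do not see a repair within this one-fixed-point design.

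For contrast, the paper's proof is modular and avoids both problems: it takes a $\Pi_1$ Rosser sentence $\theta$, sets $\xi:\equiv\lnot\theta$ (for $\Gamma=\Sigma_n$) or $\xi:\equiv\theta$ (for $\Gamma=\Pi_n$), and quotes known facts (Lindstr\"om, Exercises 5.1 and 5.2.(b)) that $\xi$ is not $\Gamma^d$-conservative over $T$ — the non-$\Sigma_1$-soundness is used only in the $\Pi_n$ case. It then takes, by Lindstr\"om's Corollary 2.6, a $\Gamma$ sentence $\gamma$ not $(T+\xi)$-equivalent to any $\Gamma^d$ sentence, and sets $\psi:\equiv\xi\land\gamma$: condition (2) is inherited from $\xi$ because $T+\psi\vdash\xi$, and condition (1) follows because a $T$-equivalence $\psi\leftrightarrow\rho$ with $\rho\in\Gamma^d$ would make $\gamma$ equivalent to $\rho$ over $T+\xi$. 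If you want to keep your from-scratch approach, you would essentially have to reprove those two cited facts, and the two jobs are better done by two separate sentences conjoined at the end rather than by one fixed point.
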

\begin{proof}
Let $\theta$ be a conventional $\Pi_1$~Rosser sentence for~$T$. 
Then $\theta$ is independent from~$T$. 

For $\Gamma = \Sigma_n$, let $\xi: \equiv \neg \theta$. 
Then $\xi$ is not $\Pi_1$-conservative over $T$ (See Lindstr\"om \cite[Exercise 5.1]{Lin}). 

For $\Gamma = \Pi_n$, let $\xi : \equiv \theta$. 
Since $T$ is not $\Sigma_1$-sound, $\xi$ is not $\Sigma_1$-conservative over $T$ (See Lindstr\"om \cite[Exercise 5.2.(b)]{Lin}). 

Let $\gamma$ be a $\Gamma$ sentence which is not provably equivalent to any $\Gamma^d$ sentence in $T + \xi$ (See Lindstr\"om \cite[Corollary 2.6]{Lin}). 
Then $\psi : \equiv \xi \land \gamma$ is a $\Gamma$ sentence satisfying the required conditions. 
\end{proof}

\begin{thm}\label{CCC}
There exists an infinite r.e.~family $\{T_i\}_{i \in \omega}$ of theories such that
\begin{enumerate}
	\item For all $k \in \omega$, $\bigcap_{i \leq k} \Cons(\Gamma, T_i) \setminus \Th(T_i)\neq \emptyset$; 
	\item $\bigl(\bigcap_{\substack{i \neq 0 \\ i \in \omega}} \Cons(\Gamma, T_i) \bigr) \setminus \Th(T_0) = \emptyset$. 
\end{enumerate}
\end{thm}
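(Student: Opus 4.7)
The plan is a direct diagonal construction. Set $T_0 := \PA$ and enumerate all $\Gamma^d$ sentences as $\{\varphi_e\}_{e \in \omega}$. For each $e$, construct uniformly a $\Gamma$ sentence $\sigma_e$ via a standard G\"odel/Rosser-type fixed point applied to the r.e.~theory $\PA + \lnot\varphi_e$, arranged so that (a) $\sigma_e$ is true in the standard model $\mathbb{N}$, and (b) $\PA + \lnot\varphi_e \nvdash \sigma_e$ whenever $\PA + \lnot\varphi_e$ is consistent. Define
\[
T_{e+1} := \PA + (\varphi_e \to \sigma_e).
\]
The axiom $\varphi_e \to \sigma_e$ is equivalent to $\lnot\varphi_e \lor \sigma_e$, a $\Gamma$ sentence (since $\lnot\varphi_e$ and $\sigma_e$ are $\Gamma$). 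The sequence $\{T_i\}_{i \in \omega}$ is r.e.~by the uniformity of the construction, and each $T_i$ is a consistent extension of $\PA$ because its axiom is satisfied in $\mathbb{N}$.

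To verify Condition~2, suppose $\varphi \in \bigl(\bigcap_{i \neq 0} \Cons(\Gamma, T_i)\bigr) \setminus \Th(T_0)$ and write $\varphi = \varphi_e$. Then $T_{e+1} + \varphi_e \vdash \sigma_e$ by modus ponens on the axiom, so the $\Gamma$-conservativity of $\varphi_e$ over $T_{e+1}$ gives $T_{e+1} \vdash \sigma_e$, i.e., $\PA \vdash \varphi_e \lor \sigma_e$, equivalently $\PA + \lnot\varphi_e \vdash \sigma_e$. By property (b), this forces $\PA + \lnot\varphi_e$ to be inconsistent, hence $\PA \vdash \varphi_e$, contradicting $\varphi_e \notin \Th(T_0) = \Th(\PA)$.

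To verify Condition~1, fix $k \in \omega$. By Theorem~\ref{GC1}, it suffices to show, for each $j \leq k$, that
\[
\left( \bigcap_{\substack{i \leq k \\ i \neq j}} \Cons(\Gamma, T_i) \right) \setminus \Th(T_j) \neq \emptyset.
\]
I apply Theorem~\ref{OSC} to the finite (hence r.e.) sequence $\{T_i : i \leq k, i \neq j\}$ with $U := T_j$ and $X := \{0, \ldots, k\} \setminus \{j\}$. The hypothesis of Theorem~\ref{OSC} reduces to the consistency of $\bigcup_{i \neq j, i \leq k} \ThG(T_i) + T_j$. This combined theory is true in $\mathbb{N}$ because every axiom of every $T_i$ (and of $T_j$) is, by property (a) above. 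Hence it is consistent, and Theorem~\ref{OSC} delivers the required witness.

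The main obstacle lies in the uniform construction of the $\sigma_e$ simultaneously meeting (a) truth in $\mathbb{N}$ and (b) unprovability in $\PA + \lnot\varphi_e$ while keeping each $T_{e+1}$ consistent even when $\PA \vdash \varphi_e$ (so that $\PA + \lnot\varphi_e$ is inconsistent and the naive Rosser/G\"odel construction could yield a false $\sigma_e$). Lemma~\ref{L1} enters precisely here: it supplies, uniformly in $e$, a $\Gamma$ sentence $\sigma_e$ that is genuinely $\Gamma$ (not provably equivalent to any $\Gamma^d$ sentence in the ambient theory) and that kills conservativity without triggering the degenerate collapse, thereby reconciling the diagonal requirement of Condition~2 with the persistence-of-witnesses requirement of Condition~1.
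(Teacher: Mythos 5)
Your proposal has a fatal structural flaw: the requirement that every $\sigma_e$ be true in $\mathbb{N}$ (and hence that every $T_i$ be sound) is \emph{incompatible} with Condition~2, by the very Theorem~\ref{OSC} you invoke for Condition~1. Apply Theorem~\ref{OSC} to the r.e.~sequence $\{T_{i+1}\}_{i \in \omega}$ with $U := T_0$ and $X := \omega$: the condition {\bf C1} then reads ``$\bigcup_{i \geq 1}\ThG(T_i) + T_0$ is consistent,'' which holds in your setup because all of these theories are true in $\mathbb{N}$. Theorem~\ref{OSC} then yields $\bigl(\bigcap_{i \geq 1}\Cons(\Gamma, T_i)\bigr) \setminus \Th(T_0) \neq \emptyset$, directly contradicting Condition~2. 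So no family of sound theories can witness the theorem; any correct construction must force $\bigcup_{i \geq 1}\ThG(T_i) + T_0$ to be inconsistent, which is exactly why the paper starts from a theory $T$ that is \emph{not} $\Sigma_1$-sound and sets $T_0 := T + \lnot\psi$, $T_i := T + \lnot\varphi_i \lor \psi$. (The paper then cannot get Condition~1 for the index $0$ from consistency of the full union either; it instead builds an explicit witness $\theta_k = \xi \lor \bigvee_{j \in X_k}\varphi_j$ over a recursively chosen proper subset $X_k$ of indices.)

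Independently of this, the construction of the $\sigma_e$ is not actually carried out, and the appeal to Lemma~\ref{L1} does not fill the gap. Lemma~\ref{L1} produces a \emph{single} $\Gamma$ sentence for a \emph{single} theory that is assumed not to be $\Sigma_1$-sound, with properties (non-equivalence to any $\Gamma^d$ sentence, failure of $\Gamma^d$-conservativity) that have nothing to do with your (a) and (b); it gives no uniform family indexed by $e$, and its hypothesis is the opposite of the soundness your argument depends on. Worse, for $\Gamma = \Sigma_1$ the pair (a)+(b) is jointly unsatisfiable: by provable $\Sigma_1$-completeness every true $\Sigma_1$ sentence is already a theorem of $\PA$, hence of $\PA + \lnot\varphi_e$. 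And even for $\Gamma = \Pi_n$, the natural candidates ($\Con_{\PA+\lnot\varphi_e}$, G\"odel or Rosser sentences) become false, and render $T_{e+1}$ inconsistent, precisely when $\PA \vdash \varphi_e$ --- a case you cannot recursively detect and exclude. The verification of Condition~2 from the stated properties is fine as far as it goes, but the properties themselves cannot be realized within your framework.
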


\begin{proof}
Let $T$ be a theory which is not $\Sigma_1$-sound. 
Let $(\varphi_i)_{i \geq 1}$ be any effective listing of all $\Gamma^d$ sentences with $T \vdash \varphi_1$. 
By Lemma \ref{L1}, there exists a $\Gamma$ sentence $\psi$ such that $\psi$ is not $T$-provably equivalent to any $\Gamma^d$ sentence and $\psi$ is not $\Gamma^d$-conservative over $T$. 
Then there exists a $\Gamma^d$ sentence $\xi$ such that $T + \psi \vdash \xi$ and $T \nvdash \xi$. 
Also $\psi$ is independent from~$T$ because $\psi$ is not $T$-equivalent to  $0=0$ nor to~$0 \neq 0$. 

Let $T_0 : = T + \neg \psi$ and for $i \geq 1$, $T_i : = T + \neg \varphi_i \lor \psi$. 
Since $T \nvdash \psi$ and $T \nvdash \neg \psi$, these theories are consistent. 
We prove that the family $\{T_i\}_{i \in \omega}$ satisfies the two conditions stated in the theorem. 
For this purpose, we prepare an increasing sequence $(X_k)_{k \geq 1}$ of finite sets of natural numbers in which each $X_k$ is a witness for condition \textbf{B1} for theories $T_1, \ldots, T_k$ and $T_0$. 
Let $\mathrm{D}_k : = \{1, 2, \ldots, k\}$. 
The increasing sequence $(X_k)_{k \geq 1}$ is inductively defined so that it satisfies the following three conditions for any $k \geq 1$:
\begin{enumerate}
	\item [(i)] $X_ k \subseteq \mathrm{D}_k$; 
	\item [(ii)] $T \nvdash \xi \lor \bigvee_{j \in X_k} \varphi_j$;  
	\item [(iii)] $T \vdash \bigvee_{j \in \mathrm{D}_k \setminus X_k} \neg \varphi_j \to \xi \lor \bigvee_{j \in X_k} \varphi_j$.
\end{enumerate}

Let $X_1 := \emptyset$. 
Then $\bigvee_{j \in X_1} \varphi_j \equiv \bot$. 
Since $T \nvdash \xi$, we have $T \nvdash \xi \lor \bigvee_{j \in X_1} \varphi_j$. 
Since $T \vdash \varphi_1$, we also have $T \vdash \bigvee_{j \in \mathrm{D}_1 \setminus X_1} \neg \varphi_j \to \xi \lor \bigvee_{j \in X_1} \varphi_j$. 

Suppose $X_k$ is already defined. 
We distinguish two cases. 
\begin{itemize}
	\item Case 1: $T \vdash \varphi_{k+1} \lor \xi \lor \bigvee_{j \in X_k} \varphi_j$. 

	Let $X_{k+1} : = X_k$. 

	Since $T \nvdash \xi \lor \bigvee_{j \in X_k} \varphi_j$, we obtain $T \nvdash \xi \lor \bigvee_{j \in X_{k+1}} \varphi_j$. 
	From condition~(iii) for $X_k$, $T \vdash \bigvee_{j \in \mathrm{D}_k \setminus X_k} \neg \varphi_j \to \xi \lor \bigvee_{j \in X_k} \varphi_j$. 
	By the assumption of Case~1, $T \vdash \neg \varphi_{k+1} \to \xi \lor \bigvee_{j \in X_k} \varphi_j$. 
	Since $j \in \mathrm{D}_{k+1} \setminus X_{k+1}$ if and only if $j \in \mathrm{D}_k \setminus X_k$ or $j = k+1$, we obtain $T \vdash \bigvee_{j \in \mathrm{D}_{k+1} \setminus X_{k+1}} \neg \varphi_j \to \xi \lor \bigvee_{j \in X_{k}} \varphi_j$. 
	Therefore, $T \vdash \bigvee_{j \in \mathrm{D}_{k+1} \setminus X_{k+1}} \neg \varphi_j \to \xi \lor \bigvee_{j \in X_{k+1}} \varphi_j$. 

	\item Case 2: $T \nvdash \varphi_{k+1} \lor \xi \lor \bigvee_{j \in X_k} \varphi_j$. 

	Let $X_{k+1} : = X_k \cup \{k+1\}$. 

	Then $T \nvdash \xi \lor \bigvee_{j \in X_{k+1}} \varphi_j$. 
	From (iii) for $X_k$, $T \vdash \bigvee_{j \in \mathrm{D}_k \setminus X_k} \neg \varphi_j \to \xi \lor \bigvee_{j \in X_k} \varphi_j$. 
	Since 	$\mathrm{D}_{k+1} \setminus X_{k+1} = \mathrm{D}_k \setminus X_k$ and $X_{k+1}\supseteq X_{k}$, we obtain $T \vdash \bigvee_{j \in \mathrm{D}_{k+1} \setminus X_{k+1}} \neg \varphi_j \to \xi \lor \bigvee_{j \in X_{k+1}} \varphi_j$. 
\end{itemize}

The definition is completed. 
We shall prove clauses 1 and 2 of the Theorem.   

1. Fix any $k$. 
For each $i \leq k$, we show $\bigl(\bigcap_{\substack{j \neq i \\ j \leq k}} \Cons(\Gamma, T_j) \bigr) \setminus \Th(T_i) \neq \emptyset$. 

For $i = 0$, let $\theta_k$ be the $\Gamma^d$ sentence $\xi \lor \bigvee_{j \in X_k} \varphi_j$. 
Then we have $T \vdash \bigvee_{j \in \mathrm{D}_k \setminus X_k} \neg \varphi_j \to \theta_k$. 
Since $T + \psi \vdash \xi$, we obtain $T + \psi \vdash \theta_k$. 
Therefore $T_j \vdash \theta_k$ for all $j \in \mathrm{D}_k \setminus X_k$. 
This means $\theta_k \in \bigcap_{j \in \mathrm{D}_k \setminus X_k} \Th_{\Gamma^d}(T_j)$. 

Suppose $T_0 + \bigcup_{j \in X_k} \ThG(T_j) \vdash \theta_k$. 
That is, 
\[
	T + \bigwedge_{j \in X_k} (\neg \varphi_j \lor \psi) + \neg \psi \vdash \xi \lor \bigvee_{j \in X_k} \varphi_j. 
\]
Then 
\[
	T + \bigwedge_{j \in X_k} \neg \varphi_j + \neg \psi \vdash \xi \lor \bigvee_{j \in X_k} \varphi_j. 
\]
Since $T + \neg \bigwedge_{j \in X_k} \neg \varphi_j \vdash \bigvee_{j \in X_k} \varphi_j$ and $T + \psi \vdash \xi$, we obtain $T \vdash \xi \lor \bigvee_{j \in X_k} \varphi_j$. 
But this contradicts condition~(ii) for~$X_{k}$, 
so $\theta_k \notin \Th\bigl(T_0 + \bigcup_{j \in X_k} \ThG(T_j)\bigr)$. 

Thus $\theta_k$ witnesses the non-inclusion 
\[
\bigcap_{j \in \mathrm{D}_k \setminus X_k} \Th_{\Gamma^d}(T_j) \nsubseteq \Th\biggl(T_0 + \bigcup_{j \in X_k}\ThG(T_j)\biggr).
\] 
By Theorem \ref{OSC}, we conclude
\begin{align}\label{f1}
	\Biggl(\bigcap_{\substack{j \neq 0 \\ j \leq k}} \Cons(\Gamma, T_j) \Biggr) \setminus \Th(T_0) \neq \emptyset. 
\end{align}

For $i \neq 0$, suppose that the theory $T_i + \bigcup_{\substack{j \neq i \\ j \leq k}} \ThG(T_j)$ is inconsistent. 
Then $\bigcup_{\substack{j \neq 0 \\ j \leq k}}T_{j} + \ThG(T_{0})$ is inconsistent. 
Notice that for each $j \geq 1$, the theory $T_j = T + \neg \varphi_j \lor \psi$ is a subtheory of $T + \psi$. 
Hence $T + \ThG(T + \neg \psi) + \psi$ is inconsistent. 
Then there exists a $\Gamma$ sentence $\gamma$ such that $T + \neg \psi \vdash \gamma$ and $T + \gamma + \psi$ is inconsistent. 
Then we obtain $T \vdash \psi \leftrightarrow \neg \gamma$, but with $\neg\gamma$ being~$\Gamma^{d}$, this contradicts our choice of~$\psi$. 
Therefore $T_{i} + \bigcup_{\substack{j \neq i \\ j \leq k}} \ThG(T_j)$ is consistent. 
We obtain $\bigl(\bigcap_{\substack{j \neq i \\ j \leq k}} \Cons(\Gamma, T_j) \bigr) \setminus \Th(T_i) \neq \emptyset$ by Theorem \ref{OSC}. 
By combining this with (\ref{f1}), we conclude
\[
	\text{for any}\ i \leq k,\ \Biggl(\bigcap_{\substack{j \neq i \\ j \leq k}} \Cons(\Gamma, T_j) \Biggr) \setminus \Th(T_i) \neq \emptyset. 
\]

By Theorem \ref{GC1}, this is equivalent to $\bigcap_{i \leq k}\Cons(\Gamma, T_i) \setminus \Th(T_i)\neq \emptyset$. 

2. It suffices to prove that for any $i \geq 1$, $\varphi_i \notin \Cons(\Gamma, T_i)$ or $T_0 \vdash \varphi_i$. 
Clearly, $T_i + \varphi_i \vdash \psi$. 
If $T_i \nvdash \psi$, then $\varphi_i \notin \Cons(\Gamma, T_i)$ because $\psi$ is a $\Gamma$ sentence. 
If $T_i \vdash \psi$, then $T + \neg \varphi_i \lor \psi \vdash \psi$. 
Thus $T \vdash \neg \varphi_i \to \psi$. 
Hence $T_0 \vdash \varphi_i$. 
\end{proof}

Since $\bigcap_{\substack{i \in \omega}}\Cons(\Gamma, T_i) \setminus \Th(T_i)= \emptyset$ obviously follows from the second clause of Theorem~\ref{CCC}, we obtain the following corollary. 
This is a counterexample to the implication \textbf{G3}~$\Rightarrow$~\textbf{G1}.

\begin{cor}\label{CfC}
There exists an infinite r.e.~family $\{T_i\}_{i \in \omega}$ of theories satisfying the following two conditions:
\begin{enumerate}
	\item For all $k \in \omega$, $\bigcap_{i \leq k}\Cons(\Gamma, T_i) \setminus \Th(T_i)\neq \emptyset$; 
	\item $\bigcap_{\substack{i \in \omega}}\Cons(\Gamma, T_i) \setminus \Th(T_i)= \emptyset$.\qed
\end{enumerate}
\end{cor}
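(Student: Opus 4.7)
The plan is to apply Theorem \ref{CCC} directly. Take the very same r.e.~sequence $\{T_i\}_{i \in \omega}$ of theories constructed in that theorem. Clause 1 of the corollary is then literally clause 1 of Theorem \ref{CCC}, so there is nothing to prove there.

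For clause 2, I would argue by contradiction. Suppose, to the contrary, that there exists a sentence $\varphi \in \bigcap_{i \in \omega}\bigl(\Cons(\Gamma, T_i) \setminus \Th(T_i)\bigr)$. Then for every $i \in \omega$ we have $\varphi \in \Cons(\Gamma, T_i)$ and $\varphi \notin \Th(T_i)$. Restricting to $i \neq 0$, this yields $\varphi \in \bigcap_{\substack{i \neq 0 \\ i \in \omega}} \Cons(\Gamma, T_i)$, while the condition $\varphi \notin \Th(T_0)$ (the $i = 0$ instance) shows $\varphi \notin \Th(T_0)$. Hence $\varphi \in \bigl(\bigcap_{\substack{i \neq 0 \\ i \in \omega}} \Cons(\Gamma, T_i)\bigr) \setminus \Th(T_0)$, contradicting clause 2 of Theorem \ref{CCC}.

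There is no genuine obstacle here; the corollary is just the contrapositive of the trivial implication ``belonging to the full intersection forces belonging to the intersection with $i = 0$ dropped, once intersected with the complement of $\Th(T_0)$'', applied to the counterexample sequence already built in Theorem \ref{CCC}.
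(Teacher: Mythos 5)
Your proposal is correct and coincides with the paper's own argument: the paper derives the corollary from Theorem \ref{CCC} by exactly the observation that a nonempty full intersection would yield a nonempty $\bigl(\bigcap_{\substack{i \neq 0 \\ i \in \omega}} \Cons(\Gamma, T_i)\bigr) \setminus \Th(T_0)$, contradicting clause 2 of that theorem. Nothing is missing.
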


We return to conditions introduced in Subsection \ref{GCs}. 
Let us focus on infinite families $\{T_i\}_{i \in \omega}$.
\begin{description}
	\item [B1] There exists an r.e.~set $X \subseteq \omega$ such that 
\[
\bigcap_{i \in \omega \setminus X} \Th_{\Gamma^d}(T_i) \nsubseteq \Th\biggl(U + \bigcup_{i \in X} \ThG(T_i)\biggr).
\] 
	\item[B2] There exists a set $X \subseteq \omega$ such that 
\[
\bigcap_{i \in \omega \setminus X} \Th_{\Gamma^d}(T_i) \nsubseteq \Th\biggl(U + \bigcup_{i \in X} \ThG(T_i)\biggr).
\] 
	\item[B3] $\bigcap_{i \in \omega}\Cons(\Gamma, T_{i})\setminus \Th(U) \neq \emptyset$. 
	\item[B4] $\bigcap_{i \leq k}\Cons(\Gamma,T_{i})\setminus\Th(U)\neq \emptyset$
	   for all $k \in \omega$. 
\end{description}

From Theorem~\ref{CCC}, we obtain a counterexample to the implication \textbf{B4}~$\Rightarrow$~\textbf{B3}. 

\begin{cor}\label{counterexample1}
There exist an infinite r.e.~family $\{T_i\}_{i \in \omega}$ of theories and a theory\/~$U$ satisfying the following two conditions:
\begin{enumerate}
	\item For all $k \in \omega$, $\bigcap_{\substack{i \leq k}} \Cons(\Gamma, T_i) \setminus \Th(U) \neq \emptyset$; 
	\item $\bigcap_{\substack{i \in \omega}} \Cons(\Gamma, T_i)\setminus \Th(U) = \emptyset$.\qed
\end{enumerate}
\end{cor}

Finally, we construct a counterexample to the implication \textbf{B2} $\Rightarrow$ \textbf{B1}.

\begin{thm}\label{SCex}
For any $\Pi_{1}$ set $X \subseteq \omega$, there exist an infinite r.e.~family $\{T_i\}_{i \in \omega}$ and a theory\/ $U$ such that for any\/ $Y \subseteq \omega$, 
\[
\bigcap_{i \in \omega \setminus Y} \Th_{\Gamma^d}(T_i) \nsubseteq \Th\biggl(U + \bigcup_{i \in Y} \ThG(T_i)\biggr) \ \text{if and only if\/} \ Y = X.
\] 
\end{thm}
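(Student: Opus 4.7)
The plan is to convert the $\Pi_{1}$ structure of $X$ into a uniform r.e.~family of theories by means of Guaspari's construction. Since $\omega \setminus X$ is r.e., Fact~\ref{F1} yields a $\Gamma$-formula $\delta(x)$ such that $T \vdash \delta(\overline{i})$ for every $i \notin X$ while $\neg \delta(\overline{i})$ is $\Gamma$-conservative over $T$ for every $i \in X$. I would then define $T_{i} := T + (\delta(\overline{i}) \to \gamma)$ and $U := T + \neg \gamma$, where $\gamma$ is a $\Gamma$-sentence and $\varphi$ is a $\Gamma^{d}$-sentence to be constructed satisfying $T \vdash \gamma \to \varphi$, $T \nvdash \varphi$, $T + \neg \gamma \nvdash \varphi$, and $T + \neg \delta(\overline{i}) \nvdash \varphi$ for every $i \in X$.

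With these choices, for $i \notin X$ we have $T_{i} \equiv T + \gamma$, so $\gamma \in \ThG(T_{i})$ and $\varphi \in \Th_{\Gamma^{d}}(T_{i})$; for $i \in X$ we have $T_{i} \equiv T + (\neg \delta(\overline{i}) \lor \gamma)$, and the $\Gamma$-conservativity of $\neg \delta(\overline{i})$ forces $\ThG(T_{i}) = \ThG(T)$. I would then verify the biconditional by a case split on $Y$. If $Y \not\subseteq X$, pick $i_{0} \in Y \setminus X$; then $\gamma \in \ThG(T_{i_{0}})$ is incompatible with $\neg \gamma \in U$, so $\bigcup_{i \in Y}\ThG(T_{i}) + U$ is inconsistent and the required inclusion holds trivially. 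If $Y = X$, the right-hand side collapses to $\Th(T + \neg \gamma)$, and $\varphi$ witnesses $\bigcap_{i \notin X}\Th_{\Gamma^{d}}(T_{i}) \nsubseteq \Th(T + \neg \gamma)$. If $Y \subsetneq X$, the left-hand intersection gains the extra factors $\Th_{\Gamma^{d}}(T_{i_{0}})$ for $i_{0} \in X \setminus Y$; one reduces the inclusion to showing that every $\Gamma^{d}$ sentence in $\Th_{\Gamma^{d}}(T + \gamma) \cap \bigcap_{i \in X \setminus Y} \Th_{\Gamma^{d}}(T + \neg \delta(\overline{i}))$ already belongs to $\Th(T + \neg \gamma)$, and in particular $\varphi$ itself is expelled from the intersection by the hypothesis $T + \neg \delta(\overline{i}) \nvdash \varphi$.

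The main obstacle is precisely the construction of the pair $(\gamma, \varphi)$ satisfying all the simultaneous independence requirements, especially $T + \neg \delta(\overline{i}) \nvdash \varphi$ for every $i \in X$ together with the collapse needed in the $Y \subsetneq X$ case. I would carry this out by a simultaneous diagonal fixed-point construction in the style of Guaspari's original argument underlying Fact~\ref{F1} and Fact~\ref{exHCons}, defining $\varphi$ so as to evade provability in each theory $T + \neg \delta(\overline{i})$ with $i \in X$ and in $T + \neg \gamma$, while remaining a consequence of $T + \gamma$. Exploiting the fact that $\{T + \neg \delta(\overline{i})\}_{i \in \omega}$ is a uniform r.e.~family ensures that this diagonalization can be carried out effectively. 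Once such a $\varphi$ is at hand, the verification of all four cases above reduces to routine manipulations using Facts~\ref{Gam} and \ref{dual} together with the conservativity properties guaranteed by Fact~\ref{F1}.
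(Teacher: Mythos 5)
Your overall architecture matches the paper's: the same shape $T_i = T + (\neg\delta(\overline{i}) \lor \gamma)$ and $U = T + \neg\gamma$ built from Fact~\ref{F1} applied to the r.e.~set $\omega\setminus X$, and the same case split on how $Y$ relates to $X$. But there are two genuine gaps. First, you arrange for $\neg\delta(\overline{i})$ to be \emph{$\Gamma$-conservative over $T$} for $i\in X$, whereas what is needed is \emph{$\Gamma^{d}$-conservativity over $U=T+\neg\gamma$}: the sentences you must control are the $\Gamma^{d}$ members of $\Th_{\Gamma^d}(T_i)$. In the case $X\nsubseteq Y$ the theorem demands the full inclusion $\bigcap_{i\in\omega\setminus Y}\Th_{\Gamma^d}(T_i)\subseteq\Th(\bigcup_{i\in Y}\ThG(T_i)+U)$, i.e.\ that \emph{every} $\Gamma^{d}$ sentence in the intersection is provable there --- not merely that your particular witness $\varphi$ drops out of the intersection, which is all you verify. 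The paper obtains this from the collapse $\Th_{\Gamma^d}(T_j)\subseteq\Th(T)$ for $j\in X$: if $T_j\vdash\chi$ with $\chi\in\Gamma^{d}$, then $T+\neg\delta(\overline{j})\vdash\chi$ and $T+\gamma\vdash\chi$, so $T+\neg\gamma+\neg\delta(\overline{j})\vdash\chi$, and $\Gamma^{d}$-conservativity over $T+\neg\gamma$ yields $T+\neg\gamma\vdash\chi$ and hence $T\vdash\chi$. With your $\Gamma$-conservative-over-$T$ version of $\delta$ this step is unavailable, since $\chi$ is $\Gamma^{d}$, not $\Gamma$.

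Second, the construction you defer --- a $\varphi$ with $T+\neg\delta(\overline{i})\nvdash\varphi$ simultaneously for all $i\in X$, by an ``effective diagonalization'' --- cannot be carried out as described: the family to be evaded is indexed by $X$, which is $\Pi_1$ and, in the intended application (Corollary~\ref{MSCex}), not r.e.; and diagonalizing over all of $\omega$ is impossible because $T+\neg\delta(\overline{i})$ is inconsistent for $i\notin X$ (as $T\vdash\delta(\overline{i})$ there). Moreover these conditions aim at the wrong target: unprovability of $\varphi$ in each $T+\neg\delta(\overline{i})$ separately does not give $\bigcup_{i\in X}\ThG(T_i)+U\nvdash\varphi$, which is what the $Y=X$ case requires. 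In fact no new diagonalization is needed: once $\delta$ has the right conservativity property, it suffices to take $T$ not $\Sigma_1$-sound and apply Lemma~\ref{L1} to get a $\Gamma$ sentence $\psi$ with a $\Gamma^{d}$ witness $\xi$ of non-$\Gamma^{d}$-conservativity ($T+\psi\vdash\xi$, $T\nvdash\xi$, hence automatically $T+\neg\psi\nvdash\xi$); the non-provability of $\xi$ from $\bigcup_{i\in X}\ThG(T_i)+U$ then follows by iterating the conservativity of the finitely many $\neg\delta(\overline{i_l})$ occurring in any putative proof. I recommend rerunning your argument with $\delta$ obtained from Fact~\ref{F1} for the class $\Gamma^{d}$ over the base theory $T+\neg\gamma$.
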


\begin{proof}
Let $X$ be any $\Pi_{1}$ set. 
Let $T$ be some theory which is not $\Sigma_1$-sound. 
By Lemma \ref{L1}, there exist a $\Gamma$ sentence $\psi$ and a $\Gamma^d$ sentence $\xi$ such that $T \nvdash \psi$, $T \nvdash \neg \psi$, $T + \psi \vdash \xi$ and $T \nvdash \xi$. 
Since $\omega \setminus X$ is an r.e.~set, by Fact \ref{F1}, there exists a $\Gamma^d$ formula $\delta(x)$ satisfying the following conditions for any $i \in \omega$: 
\begin{itemize}
	\item If $i \notin X$, then $T + \neg \psi \vdash \delta(\overline{i})$; 
	\item If $i \in X$, then $\neg \delta(\overline{i})$ is $\Gamma^d$-conservative over $T + \neg \psi$. 
\end{itemize}
Let $T_i : = T + \neg \delta(\overline{i}) \lor \psi$ for $i \in \omega$ and $U : = T + \neg \psi$. 

\begin{cl}\label{Cl2} The family $\{T_{i}\}_{i \in \omega}$ satisfies the following conditions for any $i \in \omega$: 
\begin{itemize}
	\item[\upshape(i)] If $i \notin X$, then $T_i$ is deductively equivalent to $T + \psi$; 
	\item[\upshape(ii)] If $i \in X$, then $\ThD(T_i) \subseteq \Th(T)$. 
\end{itemize}
\end{cl}
\begin{proof}
(i). Suppose $i \notin X$. 
Then $T + \neg \psi \vdash \delta(\overline{i})$. 
Thus $T \vdash (\neg \delta(\overline{i}) \lor \psi) \leftrightarrow \psi$. 
This means that $T_i = T + \neg \delta(\overline{i}) \lor \psi$ is deductively equivalent to $T + \psi$. 

(ii). Suppose $i \in X$. 
Then $\neg \delta(\overline{i})$ is $\Gamma^d$-conservative over $T + \neg \psi$. 
For an arbitrary $\Gamma^d$ sentence $\varphi$, suppose $T_i \vdash \varphi$. 
Then $T + \neg \delta(\overline{i}) \lor \psi \vdash \varphi$, and hence $T + \neg \delta(\overline{i}) \vdash \varphi$ and $T + \psi \vdash \varphi$. 
We have $T + \neg \psi + \neg \delta(\overline{i}) \vdash \varphi$. 
By the $\Gamma^{d}$-conservativity of $\lnot \delta(\overline{i})$, $T + \neg \psi \vdash \varphi$. 
Hence $T \vdash \varphi$. 
\end{proof}

We return to the proof of the theorem.
First, we show $\bigcap_{i \in \omega \setminus X} \Th_{\Gamma^d}(T_i) \nsubseteq \Th\bigl(U + \bigcup_{i \in X} \ThG(T_i)\bigr)$. 

By clause~(i) of the Claim, for any $i \notin X$, $T_i \vdash \xi$ because $T + \psi \vdash \xi$. 
Then $\xi \in \bigcap_{i \in \omega \setminus X} \Th_{\Gamma^d}(T_i)$. 

Suppose, towards  contradiction, that the theory $U + \bigcup_{i \in X}\ThG(T_i)$ proves~$\xi$. 
Then there are $i_0, \ldots, i_{k-1} \in X$ such that $T + \bigwedge_{l < k} (\neg \delta(\overline{i_l}) \lor \psi) + \neg \psi \vdash \xi$. 
Thus $T + \bigwedge_{l < k} \neg \delta(\overline{i_l}) + \neg \psi \vdash \xi$.
By Lemma \ref{cCons}.1, we obtain $T + \neg \psi \vdash \xi$. 
Since $T + \psi \vdash \xi$, it follows that $T \vdash \xi$, contradicting the choice of~$\xi$.
Therefore $U + \bigcup_{i \in X}\ThG(T_i) \nvdash \xi$. 
We conclude $\bigcap_{i \in \omega \setminus X} \Th_{\Gamma^d}(T_i) \nsubseteq \Th(U + \bigcup_{i \in X} \ThG(T_i))$. 

Next, we prove that if $Y \neq X$, then 
\[
\bigcap_{i \in \omega \setminus Y} \Th_{\Gamma^d}(T_i) \subseteq \Th\biggl(U + \bigcup_{i \in Y} \ThG(T_i)\biggr).
\] 
Let $Y \subseteq \omega$ be such that $Y \neq X$. 
We distinguish the following two cases. 
\begin{itemize}
	\item Case 1: $Y \nsubseteq X$. 
	
Let $j\in Y\setminus X$. 
Then by clause~(i) of the Claim, $T_j$ is deductively equivalent to $T + \psi$. 
Thus $\psi \in \ThG(T_j)$. 
Since $U = T + \neg \psi$,
we conclude that $U + \bigcup_{i \in Y} \ThG(T_i)$ is inconsistent. 
Therefore the inclusion 
$\bigcap_{i \in \omega \setminus Y} \Th_{\Gamma^d}(T_i) \subseteq \Th\bigl(U + \bigcup_{i \in Y} \ThG(T_i)\bigr)$ holds trivially. 

	\item Case 2: $X \nsubseteq Y$. 
	
Fix $j\in X\setminus Y$.
Let $\varphi$ be any $\Gamma^d$ sentence with $\varphi\in\bigcap_{i\in\omega \setminus Y} \Th_{\Gamma^d}(T_i)$. 
Then $T_j \vdash \varphi$. 
By clause~(ii) of the Claim, $\ThD(T_j) \subseteq \Th(T)$. 
Thus $T \vdash \varphi$. 
Since $U$ is an extension of $T$, $U$ also proves $\varphi$. 
This shows $\bigcap_{i \in \omega \setminus Y} \Th_{\Gamma^d}(T_i) \subseteq \Th\bigl(U + \bigcup_{i \in Y} \ThG(T_i)\bigr)$. 
\end{itemize}

Therefore, $\{T_i\}_{i \in \omega}$ and $U$ satisfy the required conditions. 
\end{proof}

Theorem \ref{SCex} leads to a counterexample to the implication \textbf{B2} $\Rightarrow$ \textbf{B1}. 

\begin{cor} \label{MSCex}
There exist an infinite r.e.~family $\{T_i\}_{i \in \omega}$ of theories and a theory\/ $U$ satisfying the following two conditions:
\begin{enumerate}
	\item There exists a set $X \subseteq \omega$ such that 
\[
\bigcap_{i \in \omega \setminus X} \Th_{\Gamma^d}(T_i) \nsubseteq \Th\biggl(U + \bigcup_{i \in X} \ThG(T_i)\biggr);
\] 
	\item There is no r.e.~set $X \subseteq \omega$ such that 
\[
\bigcap_{i \in \omega \setminus X} \Th_{\Gamma^d}(T_i) \nsubseteq \Th\biggl(U + \bigcup_{i \in X} \ThG(T_i)\biggr).
\] 
\end{enumerate}
\end{cor}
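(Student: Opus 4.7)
The plan is to derive this corollary directly from Theorem \ref{SCex} by an appropriate choice of the parameter set $X$. Theorem \ref{SCex} associates to each $\Pi_{1}$ set $X \subseteq \omega$ a sequence $\{T_i\}_{i \in \omega}$ and a theory $U$ for which the non-inclusion $\bigcap_{i \in \omega \setminus Y} \Th_{\Gamma^d}(T_i) \nsubseteq \Th(\bigcup_{i \in Y} \ThG(T_i) + U)$ holds precisely when $Y = X$. The target corollary wants at least one $Y$ to witness the non-inclusion (condition 1) while ruling out any r.e.~witness (condition 2). These two demands reconcile exactly when the unique witness $X$ furnished by Theorem \ref{SCex} is $\Pi_{1}$ but not r.e.

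Concretely, I would first fix a properly $\Pi_{1}$ set $X \subseteq \omega$, for instance the complement of the set of G\"odel numbers of $\PA$-provable sentences, or the complement of any non-recursive r.e.~set. Such a set is $\Pi_{1}$ (hence eligible for the hypothesis of Theorem \ref{SCex}) but not r.e., by the unsolvability of the halting problem or the undecidability of $\PA$. Apply Theorem \ref{SCex} to this $X$ to obtain the desired r.e.~sequence $\{T_i\}_{i \in \omega}$ and theory $U$.

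Condition 1 of the corollary is then immediate: the set $X$ itself is a subset of $\omega$ witnessing $\bigcap_{i \in \omega \setminus X} \Th_{\Gamma^d}(T_i) \nsubseteq \Th(\bigcup_{i \in X} \ThG(T_i) + U)$, by the ``if'' direction of Theorem \ref{SCex}. For condition 2, suppose towards a contradiction that some r.e.~set $Y \subseteq \omega$ also witnessed the non-inclusion. By the ``only if'' direction of Theorem \ref{SCex}, we would have $Y = X$; but then $X$ would be r.e., contradicting our choice. Hence no r.e.~witness exists.

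There is essentially no obstacle here beyond quoting Theorem \ref{SCex} correctly and verifying that a properly $\Pi_{1}$ set exists, which is standard. The corollary is really just an instantiation step: all of the combinatorial work has been done inside Theorem \ref{SCex} (the use of Fact \ref{F1} and Lemma \ref{L1}), and the only new ingredient is the classical fact that the $\Pi_{1}$ sets properly extend the r.e.~sets.
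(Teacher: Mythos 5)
Your proposal is correct and matches the paper's own proof essentially verbatim: both take a $\Pi_1$ set $X$ that is not r.e., apply Theorem \ref{SCex} to it, and observe that the unique witness of the non-inclusion cannot be r.e. The only addition is your explicit example of a properly $\Pi_1$ set, which the paper leaves implicit.
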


\begin{proof}
Let $X \subseteq \omega$ be a $\Pi_{1}$ set which is not r.e. 
Let $\{T_{i}\}_{i \in \omega}$ be an r.e.~family of theories and $U$ a theory corresponding to that~$X$ as in Theorem \ref{SCex}. 
Then, 
\[
\bigcap_{i \in \omega \setminus X} \Th_{\Gamma^d}(T_i) \nsubseteq \Th\biggl(U + \bigcup_{i \in X} \ThG(T_i)\biggr).
\] 
Furthermore, for any r.e.~set $Y$, 
\[
\bigcap_{i \in \omega \setminus Y} \Th_{\Gamma^d}(T_i) \subseteq \Th\biggl(U + \bigcup_{i \in Y} \ThG(T_i)\biggr)
\]
because $Y \neq X$. 
\end{proof}

\bibliographystyle{plain}
\bibliography{ref}

\end{document}